\numberwithin{equation}{section} 
\newtheorem{theorem}{Theorem}[section]
\newtheorem{theorem*}{Theorem}
\newtheorem{lemma}[theorem]{Lemma}
\newtheorem{corollary}[theorem]{Corollary}
\newtheorem{remark}[theorem]{Remark}
\newtheorem{remark*}[theorem*]{Remark}
\def\R{{\mathbb R}}
\def\cB{{\mathcal B}}
\def\G{\Gamma}
\def\r{\rho}
\def\1{\left(}
\def\2{\right)}
\def\3{\left\{}
\def\4{\right\}}
\def\8{\infty}
\def\sm{\setminus}
\begin{document}
\title[]{Regularity for parabolic systems with critical growth in the gradient and applications}

\author[A. Banerjee]{Agnid Banerjee}
\address{Tata Institute of Fundamental Research\\
Centre For Applicable Mathematics \\ Bangalore-560065, India}\email[Agnid Banerjee]{agnidban@gmail.com}
\author[G. D\'avila]{Gonzalo D\'avila}
\address{
Departamento de Matem\'atica, Universidad T\'ecnica Federico Santa Mar\'ia \\
Casilla: v-110, Avda. Espa\~na 1680, Valpara\'iso, Chile
}
\email{gonzalo.davila@usm.cl}

\author[Y. Sire]{Yannick Sire}
\address{\noindent Department of Mathematics, Johns Hopkins University, 404 Krieger Hall, 3400 N. Charles Street, Baltimore, MD 21218, USA}

\thanks{A.B.  was  supported in part by SERB Matrix grant MTR/2018/000267 and also  by Department of Atomic Energy,  Government of India, under
project no.  12-R \& D-TFR-5.01-0520.\\
G. D. was partially supported by Fondecyt Grant No. 1190209.}

\begin{abstract}
Motivated by problems arising in geometric flows, we prove several  regularity results for systems of local and nonlocal equations, adapting to the parabolic case a neat argument due to Caffarelli.  The geometric motivation of this work comes from recent works arising in the theory harmonic maps with free boundary in particular. We prove H\"older regularity of weak solutions.  
\end{abstract}

\maketitle

\tableofcontents

\section{Introduction}

Elliptic and parabolic systems with critical growth in the gradient appear in many geometric variational problems (see for instance \cite{heleinBook,riviere}). The purpose of this paper is to develop a regularity theory for such systems, with a {\sl subcritical} condition which ensures {\sl full and not partial} regularity. We follow a nice approach provided by Caffarelli in \cite{Caffarelli-Systems} for elliptic systems. Our motivation comes from some problems recently introduced in the theory of minimal surfaces with free boundary through the study of the so-called harmonic maps with free boundary and their flows (see e.g. \cite{Millot-Sire}). As described in \cite{Millot-Sire,DLP}, harmonic maps with free boundary can be reformulated in a very natural way as harmonic maps with respect to an $\dot H^{1/2}$-energy hence leading to a nonlocal system of elliptic equations. See \cite{DR,DR2}

Regularity is a major issue in the theory of PDEs and in geometric analysis. The feature of the problems under consideration in the present paper is that the systems are {\sl critical} for the scaling, hence for the regularity, but somehow {\sl subcritical} for the underlying geometry, making full regularity possible. We are mainly concerned with problems arising in the theory of harmonic map and their heat flows. We refer the reader to the book \cite{bookLW} for an updated account on the theory. 

In order to facilitate the presentation, we will first provide a proof in the case of the following parabolic equation, arising as the heat flow of harmonic maps into Euclidean spheres
\begin{align}
u_t-\Delta u - u|Du|^2=0, \  \text{in } B_2\times(-1,1)
\end{align}

Then, we will investigate one type of nonlocal parabolic equations, namely for $s \in (0,1)$ and $n \geq 1$

\begin{align}\label{fracIntro1}
u_t+(-\Delta)^s u = u\,\mathcal B(u,u), \  \text{in } \R^n\times(-1,1)
\end{align}

where $(-\Delta)^s$ stands for the Fourier multiplier of symbol $|\xi|^{2s}$ and $\mathcal B(u,u)$ is a suitable bilinear form;

We will also consider the equation 
\begin{align}\label{fracIntro2}
(\partial_t-\Delta)^s u =0, \  \text{in } \R^n\times \R
\end{align}
where $(\partial_t-\Delta)^s$ stands for the Laplace-Fourier multiplier of symbol $(i\tau+|\xi|^2)^s$.

Geometrically, the nonlocal operator involved in \eqref{fracIntro1} come from a formulation of the heat flow of harmonic map with free boundary written in nonlocal form, namely 
$$
\partial_t u +(-\Delta)^s u \perp T_{u(x)} \mathbb S^{m-1}. 
$$
Related to another formulation of this flow is the distributional equation given by 
$$
(\partial_t-\Delta)^s u \perp T_{u(x)} \mathbb S^{m-1},
$$
motivating the study of \eqref{fracIntro2}. The previous equations have to be understood distributionally of course. Clearly, these two flows admit the same stationary solutions, which are $s-$harmonic maps into spheres, which have been studied in \cite{DR,DR2,Millot-Sire,pegon}. See also \cite{roberts} for an intrinsic version of such maps.  

To put our results in perspective, we would like to mention that as far as the regularity of heat flow of harmonic maps is concerned, a way to construct weak solutions and study their regularity is to have a suitable monotonicity formula for a Ginzburg-Landau approximation of the system (see the monograph \cite{bookLW}). At the moment such a  monotonicity formula is {\sl not } available for the operator involved in  \eqref{fracIntro1}, i.e. the usual fractional heat operator. On the other hand, it is known (see \cite{banerjeeGaro}) that such a quantity exists for \eqref{fracIntro2}. This is due to the existence of a suitable extension to the upper-half space (see \cite{CafSil,ST,NS}). The absence of monotone quantity for \eqref{fracIntro1} is actually the main motivation to investigate the regularity of this system.

\subsection*{Main results}

In this section, we state our main results. The first theorem deals with local elliptic systems arising in the heat flow of harmonic maps into spheres. 

\begin{theorem}\label{2ndorder}
Let $u$ be a weak solution in $\R^n \times(-1,1]\to\R^m$ of 
\begin{align}\label{parabolicsystem}
u_t-\Delta u - u|Du|^2=0, 
\end{align}
such that $\|u\|_\infty <1$. Then $u$ is $C^\alpha_{loc} (\R^n \times(-1,1])$ for some $\alpha \in (0,1)$.

\end{theorem}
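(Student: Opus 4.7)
The plan is to adapt Caffarelli's scalar-reduction technique from \cite{Caffarelli-Systems} to the parabolic setting by combining it with the parabolic weak Harnack inequality. The key auxiliary function is $w := 1 - |u|^2$. First I would dot the system with $u$ and use
\[
(|u|^2/2)_t - \Delta(|u|^2/2) = u\cdot(u_t - \Delta u) - |Du|^2
\]
together with $u\cdot(u_t-\Delta u)=|u|^2|Du|^2$ to derive
\[
w_t - \Delta w = 2w|Du|^2 \geq 0,
\]
so that $w$ is a nonnegative weak supersolution of the heat equation; the assumption $\|u\|_\infty<1$ moreover gives the strict positive lower bound $w \geq \delta_0 := 1-\|u\|_\infty^2 > 0$.

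Next, for each component $u^i$ and choice of sign, I would compute
\[
(\pm u^i + \lambda w)_t - \Delta(\pm u^i + \lambda w) = (\pm u^i + 2\lambda w)|Du|^2.
\]
Taking $\lambda = 1/(2\delta_0)$ and using $|u^i|\leq 1$ together with $w \geq \delta_0$, the right-hand side becomes nonnegative, so both $u^i + \lambda w$ and $-u^i + \lambda w$ are (after a harmless additive shift) nonnegative heat supersolutions on any parabolic subcylinder. This is the key structural fact that lets the system be treated by scalar De Giorgi--Nash--Moser methods.

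The main analytical step is an oscillation-decay lemma on a parabolic cylinder $Q_r$. Writing $M = \sup_{Q_r} u^i$, $m = \inf_{Q_r} u^i$, and $\omega = M-m$, the De Giorgi measure alternative guarantees that one of $u^i - m$ or $M - u^i$ exceeds $\omega/2$ on at least half of $Q_r$. Adding the correction $\lambda(w - \inf_{Q_r} w)$ to the corresponding function produces a nonnegative heat supersolution that is $\geq\omega/2$ on a set of positive measure. The parabolic weak Harnack inequality (Aronson--Serrin, Krylov--Safonov) then yields
\[
\osc_{Q_{r/2}} u^i \leq (1-\theta)\,\omega + \lambda\,\osc_{Q_r} w,\qquad \theta \in (0,1).
\]
Applying the same device to the supersolution $w - \inf_{Q_r} w$, I would obtain in the favorable alternative $\osc_{Q_{r/2}} w \leq (1-\theta')\,\osc_{Q_r} w$; iterating the two inequalities on dyadic cylinders should then produce the desired H\"older estimate.

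The hard part will be the unfavorable alternative for $w$ itself: when $w$ is close to its supremum on most of $Q_r$, the weak Harnack inequality for $w - \inf w$ provides no direct improvement since $w$ is only a pure supersolution. I plan to exploit the bounded subsolution structure of $\sup w - w$, whose heat operator equals $-2w|Du|^2 \leq 0$, together with the strict positivity $w \geq \delta_0$, to obtain two-sided control on $w$. Arranging the coupled iteration between $\osc u^i$ and $\osc w$ so that it contracts with a factor strictly less than one is the delicate quantitative point inherited from \cite{Caffarelli-Systems}; the remaining ingredients are standard parabolic De Giorgi--Nash--Moser machinery applied to the supersolutions constructed above.
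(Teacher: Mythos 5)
Your structural computations are correct: $w=1-|u|^2$ is indeed a nonnegative heat supersolution with $(\partial_t-\Delta)w=2w|Du|^2$, and $\pm u^i+\lambda w$ with $\lambda=1/(2\delta_0)$ are supersolutions as well. However, your route diverges from the paper's in a way that opens a genuine gap, which you partially acknowledge but do not resolve.

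The paper follows Caffarelli's original scheme and works with the single scalar auxiliary function
\[
h=\tfrac12 M^2+(1-l)M-\tfrac12|u|^2-\xi\cdot u,\qquad |\xi|\le 1-l,
\]
applies the parabolic weak Harnack inequality (Trudinger) directly to $h$, and then \emph{chooses $\xi$ in the direction of $u(x,t)$} at the point being estimated. That directional choice is the crux: it yields the geometric statement $u(B^-)\subset B_{M(1-\delta)}(\delta\bar u)$, i.e.\ the image of $u$ fits in a ball whose radius shrinks by a fixed factor $(1-\delta)$ while its center drifts toward $\bar u$. The oscillation decay is thus vectorial and closes immediately under scaling and iteration (Corollary \ref{corocontrol1}), with $\delta(l)>0$ for every $l<1$. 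Your componentwise variant loses exactly this mechanism.

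Concretely, the coupled inequality
\[
\osc_{Q_{r/2}}u^i\le(1-\theta)\osc_{Q_r}u^i+\lambda\,\osc_{Q_r}w
\]
does not yield a contraction uniformly for $\|u\|_\infty<1$. One always has $\osc w=\osc|u|^2\le 2\|u\|_\infty\cdot\osc u\le 2\sqrt{m}\,\|u\|_\infty\max_i\osc u^i$, so for the worst component the iteration reads $\osc u^i\le\bigl[(1-\theta)+2\sqrt{m}\,\lambda\|u\|_\infty\bigr]\osc u^i$. Since $\lambda=1/(2\delta_0)=1/(2(1-\|u\|_\infty^2))$ blows up as $\|u\|_\infty\to1^-$ while the Harnack parameter $\theta$ is a fixed universal constant, the bracket exceeds $1$ once $\|u\|_\infty$ is close to $1$. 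Thus your scheme would only establish the theorem under a smallness restriction $\|u\|_\infty\le\epsilon_0(m,n)$, not under the stated hypothesis $\|u\|_\infty<1$. The ``unfavorable alternative for $w$'' you flag is part of the difficulty, but the deeper problem is that treating $\osc u^i$ and $\osc w$ as separately iterated quantities discards the vectorial geometry. To close the argument you should replace the componentwise decomposition by the paper's auxiliary function $h$ with a pointwise choice of direction $\xi=(1-l)\,u/|u|$, which converts the weak Harnack lower bound into a uniform contraction of the ball containing the image of $u$, independent of any measure alternative for $w$.
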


To introduce our next result, we define first the integral formulation of the fractional laplacian. Given $s\in(0,1)$ and a vector-valued function $u$ denote the operator $(-\Delta)^s$ by (component-wise)
\[
(-\Delta)^su(x)=P.V.\int_{\R^n}\frac{u_\ell(y)-u_\ell(x)}{|x-y|^{n+2s}}\,dy,
\]
and define a bilinear form $\cB$ by
\[
\cB(u,w)=\frac{(1-s)c_n}{2}\int_{\R^n}\frac{(u(x)-u(y))\cdot(w(x)-w(y))} {|x-y|^{n+2s}}\,dy,
\]

\begin{theorem}\label{nonlocal}
Let $u$ be a weak solution in $\R^n \times(-1,1]\to\R^m$ of 
\begin{align}\label{fracparabolicsystem}
u_t+(-\Delta )^s u = u\cB(u,u), \  \text{in } \R^n \times(-1,1]
\end{align}
such that $\|u\|_\infty <1$ in $\R^n \times (-1,1)$. Then $u$ is $C^\alpha_{loc} (\R^n\times(-1,1])$ for some $\alpha \in (0,1)$. 
\end{theorem}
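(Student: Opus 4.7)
The approach mirrors the local case: derive a scalar fractional parabolic inequality for $|u|^2$, use it to produce a Caccioppoli-type bound on the nonlocal energy density $\cB(u,u)$, and close by a perturbative comparison with the linear fractional heat flow iterated on dyadic parabolic cylinders $Q_r = B_r\times(-r^{2s},0]$.

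\emph{Step 1: scalar inequality and integral bound.} Writing $|u(x)|^2 - |u(y)|^2 = 2u(x)\cdot(u(x)-u(y)) - |u(x)-u(y)|^2$ inside the principal value defining $(-\Delta)^s$, one obtains the algebraic identity
$$(-\Delta)^s(|u|^2) = 2\,u\cdot(-\Delta)^s u - 2\,\cB(u,u).$$
Substituting \eqref{fracparabolicsystem} into this identity yields
$$\partial_t|u|^2 + (-\Delta)^s|u|^2 = -2\bigl(1-|u|^2\bigr)\cB(u,u) \leq 0,$$
so $|u|^2$ is a subsolution of the fractional heat equation, and the strict inequality $1-|u|^2 \geq c_0 := 1-\|u\|_\infty^2 > 0$ is decisive. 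Testing the identity against a nonnegative smooth cutoff $\varphi$ on $Q_r$ produces a Caccioppoli-type estimate
$$c_0 \iint_{Q_r} \cB(u,u)\,\varphi \,dx\,dt \leq C(\varphi),$$
so $\cB(u,u) \in L^1_{loc}$ with quantitatively controlled local mass.

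\emph{Step 2: comparison with the linear flow and oscillation decay.} On a small cylinder $Q_r$, I would compare $u$ with the solution $v$ of the pure fractional heat equation $\partial_t v + (-\Delta)^s v=0$ on $Q_r$ sharing the initial data of $u$ at $t=-r^{2s}$ and the exterior values of $u$ on $(\R^n\setminus B_r)\times(-r^{2s},0]$. The linear solution $v$ enjoys an interior H\"older estimate with universal exponent $\alpha_0$ and universal constant. The difference $w = u - v$ solves an inhomogeneous linear equation with right-hand side $u\,\cB(u,u)$ and zero parabolic boundary data, so by a Duhamel representation together with the $L^1$ bound from Step~1 one controls $\|w\|_{L^\infty(Q_{r/2})}$ by a quantity $\eta(r)\to 0$ as $r\to 0$. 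Combining the H\"older estimate for $v$ with the smallness of $w$ then produces an oscillation decay of the form
$$\osc_{Q_{\theta r}} u \leq \lambda \,\osc_{Q_r} u + \mathrm{Tail}(u;r),$$
with $\lambda<1$ and the standard nonlocal tail term; iterating on a dyadic sequence of cylinders and summing a geometric series delivers $u\in C^\alpha_{loc}$.

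\emph{Main obstacle.} The delicate point is Step~2: the $L^1$ integrability of $\cB(u,u)$ is borderline for the Duhamel estimates attached to the fractional heat equation, so one has either to upgrade it through a self-improving reverse H\"older inequality (a nonlocal analogue of the Gehring lemma underlying Caffarelli's elliptic argument) or to exploit the smallness furnished by the absolute continuity of the local $L^1$ mass on small cylinders. Simultaneously absorbing the nonlocal tail of $u$ into the oscillation decay, so that the iteration actually closes uniformly across scales, is the main difficulty peculiar to the fractional parabolic setting.
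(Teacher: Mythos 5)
Your Step 1 --- the algebraic identity $(-\Delta)^s|u|^2 = 2u\cdot(-\Delta)^s u - 2\cB(u,u)$, the resulting subsolution property of $|u|^2$, and the Caccioppoli-type $L^1$ bound on $\cB(u,u)$ --- is consistent with what the paper does. After that, you diverge from the paper's route. The paper never leaves the Caffarelli weak--Harnack framework: it forms the scalar function $h = \tfrac12 M^2 + (1-l)M - \tfrac12|w|^2 - \xi\cdot w$ for every direction $\xi$ with $|\xi|\le 1-l$, verifies that $h\ge 0$ is a \emph{global} supersolution of $\partial_t+(-\Delta)^s$, and applies Str\"omqvist's weak Harnack inequality \emph{with tail} (Theorem~\ref{fracweakHarnack}) to produce an oscillation reduction for the vector $u$ directly (Lemma~\ref{1stcontrolfraccorrected}). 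The iteration (Lemma~\ref{corocontract}) then has to cope with the fact that the rescaled functions are not globally positive: the dilation factor $r$ is chosen small at the outset so that the accumulated tail, estimated by summing over dyadic annuli as in \cite{DKP}, is absorbed by the geometric factor $Cr^{2s}<\tfrac{1-l}{2}$. That absorption, which you flag only in passing as a secondary difficulty, is where the paper's real technical content lies.

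Your Step 2 (harmonic replacement / Duhamel) has a genuine gap that you acknowledge but do not resolve, and as written it is fatal. The Caccioppoli bound from Step 1 is exactly scale-critical: under $u_r(x,t)=u(rx,r^{2s}t)$ the local $L^1$ mass of $\cB(u_r,u_r)$ on $Q_1$ stays of order one and does not become small, so there is no $\eta(r)\to 0$ coming from absolute continuity alone. Independently of that, $L^1$ of the right-hand side is below threshold for an $L^\infty$ bound on the correction $w$: the Duhamel estimate for the fractional heat semigroup forces you to integrate $(t-\sigma)^{-n/(2s)}\|g(\cdot,\sigma)\|_{L^1}\,d\sigma$, which diverges whenever $n\ge 2s$, i.e.\ for every $n\ge 1$ and $s<1$. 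To close your argument you would therefore need a genuine self-improvement (a fractional parabolic Gehring lemma) or a different smallness mechanism, neither of which you supply. The paper's weak-Harnack scheme sidesteps this entirely: it never needs $L^\infty$ control of a correction term, only the average-to-infimum comparison for the scalar supersolution $h$, and the nonlocal tail that then enters is handled by the small-$r$ iteration, not by smallness of $\cB(u,u)$.
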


It will be clear from the proof of Theorem \ref{nonlocal} that one can consider more general systems involving a more general (than the fractional laplacian) integral operator $\mathcal L$ defined above, provided the assumption $\|u\|_\infty=M <1$ where $M$ is small enough depending on the ellipticity constants of $L$.

Theorem \ref{2ndorder} generalizes the original result of Caffarelli \cite{Caffarelli-Systems} whereas Theorem \ref{nonlocal} generalizes the one of Caffarelli with the second author \cite{CaD}. As was pointed out in \cite{CaD} (see also \cite{Millot-Sire} ) nonlocal systems of this type arise in the theory of fractional harmonic maps, which is for $s=1/2$ related to minimal surfaces with free boundary (see also \cite{roberts} where the techniques of the present paper apply to both regularity in the interior and up to the boundary).

We refrain here from dealing with the previous systems in full generality as far as the statements are concerned; but it is clear, as pointed in \cite{Caffarelli-Systems} how to adapt the argument to non-homogeneous equations for instance (allowing dependence in $x$) under some natural structural assumptions. Actually, an important feature of the nonlocal equations under consideration is that a standard iteration {\sl does not work} and one has to enlarge the class of systems to ensure that at every step, one can apply the main oscillation reduction lemma.

We now state our third main result, concerning the powers of the heat operator. Denote in the following $H^s=(\partial_t - \Delta)^s$. We prove: 
\begin{theorem}\label{fHarnackIntro}
Let $v \in \operatorname{Dom}(H^s)$ be a strong  supersolution of 
\[
H^s v=0,
\]
in $B_2\times(-4,0]$, $0\leq v$ in $\R^n \times \R$. Then there exists a constant $C$ such that 
\begin{align}\label{f2}
\|v\|_{L^1(U^-)}\leq C\inf\limits_{U^+} v(x,t),
\end{align}
where \[
U^-=B_{1/2}\times \left(-1, -\frac{1}{2}\right) \ \  U^+= B_{1/2} \times \left(-\frac{1}{4}, 0\right).
\]
\end{theorem}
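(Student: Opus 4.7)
The plan is to transfer the inequality to a local degenerate parabolic problem via the Caffarelli--Silvestre-type extension for $H^s$ developed by Nystr\"om--Sande, Stinga--Torrea, and Banerjee--Garofalo (as noted in the introduction, the availability of this extension is precisely what distinguishes $H^s$ from $\partial_t+(-\Delta)^s$). Lift $v$ to an extension $V(x,y,t)$ on $\R^n\times(0,\infty)\times\R$ satisfying the degenerate parabolic equation
\begin{equation*}
y^{1-2s}\partial_t V-\operatorname{div}_{x,y}\!\bigl(y^{1-2s}\nabla_{x,y}V\bigr)=0,\qquad y>0,
\end{equation*}
with trace $V(\cdot,0,\cdot)=v$ and boundary flux $-c_s\lim_{y\to 0^+}y^{1-2s}\partial_y V=H^s v\geq 0$. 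Because the extension admits a nonnegative Poisson-type kernel representation, the global sign condition $v\geq 0$ on $\R^n\times\R$ forces $V\geq 0$ throughout the upper half-space.

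Reflect $V$ evenly across $\{y=0\}$ to produce $\widetilde V$ on $\R^{n+1}\times\R$. A direct computation in the weak formulation shows that $\widetilde V$ is a nonnegative weak supersolution of
\begin{equation*}
LW:=|y|^{1-2s}\partial_t W-\operatorname{div}_{x,y}\!\bigl(|y|^{1-2s}\nabla_{x,y} W\bigr)\geq 0,
\end{equation*}
carrying a nonnegative distributional source $2c_s H^s v\,\delta_{\{y=0\}}$ on the reflection hyperplane (which only reinforces the supersolution property). The weight $|y|^{1-2s}$ belongs to the Muckenhoupt class $A_2$ since $-1<1-2s<1$ for $s\in(0,1)$, so the weighted parabolic weak Harnack inequality for supersolutions of such degenerate operators (Chiarenza--Serapioni, Ishige, and its extension-based adaptation in Banerjee--Garofalo) applies. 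On a pair of thin cylinders $\widetilde U^\pm:=U^\pm\times\{|y|<r\}$ this yields
\begin{equation*}
\frac{1}{\mu_w(\widetilde U^-)}\int_{\widetilde U^-}\widetilde V\,d\mu_w\leq C\inf_{\widetilde U^+}\widetilde V,\qquad d\mu_w=|y|^{1-2s}\,dx\,dy\,dt.
\end{equation*}
Letting $r\to 0^+$ and using the interior continuity of $\widetilde V$ up to $\{y=0\}$ (a consequence of De~Giorgi--Nash--Moser theory for $L$), the left-hand side controls a fixed multiple of $\|v\|_{L^1(U^-)}$ while the right-hand side is bounded above by $\inf_{U^+}v$, which gives the stated inequality.

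The main obstacle is the precise verification that the weighted weak Harnack applies in the reflected setting: one needs that the admissible class of test functions for the $A_2$-weighted supersolution theory accommodates test functions not vanishing on $\{y=0\}$, so that the nonnegative hyperplane source is harmless, and that the cylinders $\widetilde U^\pm$ can be chosen to match the parabolic Harnack geometry of $L$ (which scales as $(x,y,t)\mapsto(\lambda x,\lambda y,\lambda^2 t)$). A secondary technicality is trace recovery, namely controlling $\|v\|_{L^1(U^-)}$ from the bulk integral $\int_{\widetilde U^-}\widetilde V\,d\mu_w$; this is handled by Fubini in $y$ combined with the continuity of $\widetilde V(\cdot,y,\cdot)\to v$ as $y\to 0^+$ and the local integrability of $|y|^{1-2s}$ near $y=0$. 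All remaining rescaling and containment arguments are standard.
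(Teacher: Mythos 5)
Your setup---lifting $v$ to its Stinga--Torrea/Nystr\"om--Sande extension $V$, reflecting evenly, and noting that the Neumann flux $-c_s\lim_{y\to0^+}y^aV_y=c_s H^sv\geq0$ turns $\widetilde V$ into a supersolution of the $A_2$-weighted operator $L_a$---is correct and matches the paper's starting point. But the remainder of the argument has a genuine gap at its central step: the $L^1$-on-the-left weak Harnack inequality for $A_2$-weighted degenerate parabolic operators that you invoke is precisely what is \emph{not} available in the literature. The paper's own closing remark states this explicitly for Chiarenza--Serapioni: ``a weak Harnack estimate of the type \eqref{f2} with $L^1$ norm on the left hand side is not explicitly proven in \cite{CSe}.'' The known results of Chiarenza--Serapioni (and the degenerate De~Giorgi--Nash--Moser circle of ideas) give a weak Harnack with an $L^p$ average for some small $p\in(0,1)$, and upgrading to $p=1$ for \emph{supersolutions} is nontrivial. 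This is the raison d'\^etre of the paper's mean-value-inequality approach.

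Even granting an $L^1$-weak Harnack, your trace-recovery step cannot be carried out as stated. If you shrink the cylinders $\widetilde U^\pm=U^\pm\times\{|y|<r\}$ and send $r\to0^+$, the Harnack constant is not uniform: the cylinders degenerate in the $y$-direction while remaining of unit size in $(x,t)$, so they no longer satisfy the parabolic geometry the Harnack inequality requires, and $C=C(r)\to\infty$. If instead you keep the cylinders of fixed size (say $|y|<1$), the bulk average $\fint_{\widetilde U^-}\widetilde V\,d\mu_w$ does \emph{not} control $\|v\|_{L^1(U^-)}$ from above: continuity of the trace $\widetilde V(\cdot,y,\cdot)\to v$ as $y\to 0^+$ gives no lower bound on the integral of $V$ over a fat slab in terms of the integral of its trace. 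The paper resolves exactly this by (i) establishing a mean value inequality at the pole $(\xi,0,s)$ via an integration by parts against the backward heat kernel $G_a$ of $L_a$ (following Fabes--Garofalo), which produces the quantity $\int_{\Omega_1}y^aV\frac{|X-(\xi,0)|^2}{2|t-s|^2}G_a$, and (ii) using the explicit Poisson-kernel representation \eqref{UU} of $V$ in terms of $v$, together with $v\geq0$, to bound this bulk mean-value integral from below by $C(n,s)\int_{U^-}v$. Your argument omits the Poisson-kernel lower bound entirely, and that is where the proof actually lives.
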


Crucially used in the proof of Theorem \ref{nonlocal}, one needs to get for the problem under consideration a weak Harnack inequality which takes into accounts the {\sl tail} effects due to the nonlocality. For the operator $H^s$, such a weak Harnack inequality is not available. However, Theorem \ref{fHarnackIntro} provides another version of the weak Harnack inequality of independent interest and this is why, even if not leading with our techniques to the desired H\"older regularity, is of independent interest. 

The use of the operator $H^s$ is motivated by the following geometric flow  (see e.g. \cite{chen-lin} and references therein)
\begin{equation}\label{e:main}
\begin{cases}
u_t = \Delta u\text{ in }\mathbb{R}^n_+\times (0, T),\\
u(x,0,t) \in \mathbb{S}^{m-1}\text{ for all }(x,0,t)\in \partial\mathbb{R}^n_+\times (0, T),\\
\frac{\partial u}{\partial \nu}(x,0,t)\perp T_{u(x,0,t)}\mathbb{S}^{m-1}\text{ for all }(x,0,t)\in \partial\mathbb{R}^n_+\times (0, T),\\
u(\cdot, 0) = u_0\text{ in }\mathbb{R}^n_+
  \end{cases}
\end{equation}
for a function $u:\mathbb{R}^n_+\times [0, T)\to \mathbb{R}^m$. Here $u_0 :\mathbb{R}^n_+\to \mathbb{R}^m$ is a given smooth map and $\perp$ stands for orthogonality. 

Using \cite{ST}, the previous system can be reformulated as: let $u \in \text{Dom}\ (H^\frac12): \R^n \times \R \to \R^m$ be a strong solution  of 
\begin{align}\label{fracparabolicsystem2}
(\partial_t-\Delta)^\frac12 u = u\mathcal C(u,u), \  \text{in } \R^n \times \R
\end{align}
where
$$
\mathcal C(u,u)=\frac12\int_{0}^{+\infty}\int_{\mathbb{R}^n} |u(x,t) - u(x-z,t-\tau)|^2\frac{e^{-\frac{|z|^2}{4\tau}}}{(4 \pi)^{n/2}|\Gamma(-s)| \tau^{n/2 + 1 +\frac12}}dzd\tau
$$

In the present work, we consider only H\"older regularity. Actually, as far as Theorem \ref{2ndorder} is concerned, it is classical (see \cite{bookLW}), that such solutions are actually $C^\infty$ in space and time. The case of the parabolic nonlocal equations is more technical to consider. Indeed, due to the nonlocality of the problem, the standard compactness or potential-theoretic arguments need to be non trivially adapted due to the criticality of the right hand side of the equations. We postpone to a forthcoming work higher order regularity for such equations with critical dependence on the gradient, since it might be of independent interest.

\section{The second order case: Proof of Theorem \ref{2ndorder}}

Let $u:B_2\times(-1,1]\to\R^m$ be a bounded weak solution of
\begin{align*}
u_t-\Delta u - u|Du|^2=0, \  \text{in } B_2\times(-1,1)
\end{align*}
We are interested in proving interior H\"older regularity. For the rest of the note we will write
\[
M=\|u\|_{L^\infty(B_2\times(-1,1])}.
\]

In order to prove the interior H\"older estimates we need a weak Harnack inequality for supersolutions of the scalar heat equation. For completeness we state the result as it was presented by N. Trudinger (Theorem 1.2 in \cite{Tr}).

Denote $K_{x_0}(\rho)$ cube of center $x_0$ and side $\rho$. Furthermore, given $x_0\in\R^n$, $t_0\in\R$, $\rho>0$ and $\tau>0$ denote the rectangle $R$ by
\[
R=K_{x_0}(\rho)\times(t_0-\tau\rho^2,t_0)
\]
and the subrectangles
\begin{align*}
R^+&=K_{x_0}(\rho')\times (t_0-\tau_3\rho^2, t_0-\tau_2\rho^2)\\
R^-&=K_{x_0}(\rho')\times (t_0-\tau_1\rho^2, t_0-\tau_0\rho^2)\\
R^*&=K_{x_0}(\rho'')\times (t_0-\tau\rho^2, t_0-\tau_2\rho^2),
\end{align*}
where $0<\rho'\leq\rho''$, $0<\tau_0<\tau_1<\tau_2<\tau_3<\tau$.
\begin{theorem}[Theorem 1.2 in \cite{Tr}]\label{weakHarnack}
Let $v$ be a weak supersolution of 
\[
\Delta v-v_t=0,
\]
in $R$, $0\leq v\leq M$ in $R$. Then there exists a constant $C$ such that 
\begin{align}\label{Harnackestimate}
\frac{1}{\r^{n+2}}\|v\|_{L^1(R^*)}\leq C\min\limits_{R^-} v(x,t).
\end{align}
\end{theorem}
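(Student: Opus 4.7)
The plan is to follow the classical Moser--Trudinger iteration scheme, adapted to the parabolic setting. The three ingredients are a parabolic Caccioppoli estimate, a logarithmic estimate yielding parabolic BMO-type control on $\log v$, and a parabolic John--Nirenberg lemma bridging $L^p$ norms of $v$ with positive and negative exponents $p$. Throughout I would replace $v$ by $v+\varepsilon$ and send $\varepsilon\to 0$ at the end so that all powers make sense. First, testing the inequality $v_t-\Delta v\ge 0$ against $\phi^2(v+\varepsilon)^{q-1}$ for a smooth space--time cutoff $\phi$ yields, for every $q\neq 0,1$, the Caccioppoli-type bound
\begin{equation*}
  \sup_t \int \phi^2(v+\varepsilon)^q\,dx + \int_R \phi^2 \bigl|\nabla (v+\varepsilon)^{q/2}\bigr|^2 \le \frac{C}{|q-1|}\int_R (|\nabla\phi|^2+|\phi\phi_t|)(v+\varepsilon)^q.
\end{equation*}
Coupled with the parabolic Sobolev embedding $L^\infty_t L^2_x\cap L^2_t H^1_x \hookrightarrow L^{2(n+2)/n}_{x,t}$, this gives a reverse H\"older inequality on nested rectangles, ripe for iteration.

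Iterating this for $q<0$ along a geometric sequence of cubes interpolating between $R^*$ and $R^-$ yields, after absorbing constants,
\begin{equation*}
  \min_{R^-}(v+\varepsilon) \ge c\left(\frac{1}{|R^-|}\int_{R^-}(v+\varepsilon)^{-p_0}\right)^{-1/p_0}
\end{equation*}
for any small fixed $p_0>0$. Iterating the same estimate for $0<q<1$ in the opposite direction yields
\begin{equation*}
  \frac{1}{|R^*|}\int_{R^*}(v+\varepsilon) \le C\left(\frac{1}{|R^*|}\int_{R^*}(v+\varepsilon)^{p_0}\right)^{1/p_0}
\end{equation*}
on slightly enlarged subrectangles. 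Both iterations propagate in a single time direction and can be arranged to respect the nesting $R^-,R^*\subset R$.

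To close the gap between the positive and negative exponents I would test the equation with $\phi^2/(v+\varepsilon)$, which is the formal $q=0$ case. Writing $w=\log(v+\varepsilon)$, this gives
\begin{equation*}
  \Bigl[\int \phi^2 w\,dx\Bigr]_{t_1}^{t_2} + \int_{t_1}^{t_2}\!\int \phi^2|\nabla w|^2 \le C\int_{t_1}^{t_2}\!\int (|\nabla\phi|^2+|\phi\phi_t|).
\end{equation*}
Combined with Poincar\'e's inequality on cubes, this shows that on each time slice $w-\bar w(t)$ has bounded $L^2$ oscillation in $x$, while the time dependence of $\bar w(t)$ is controlled, with a favorable sign, by the boundary term. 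A parabolic John--Nirenberg lemma then produces $p_0>0$ and $\Lambda$ with
\begin{equation*}
  \left(\int_{R^*}(v+\varepsilon)^{p_0}\right)\left(\int_{R^-}(v+\varepsilon)^{-p_0}\right) \le \Lambda |R^*||R^-|.
\end{equation*}

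Chaining the three estimates gives $|R^*|^{-1}\int_{R^*}(v+\varepsilon)\le C\min_{R^-}(v+\varepsilon)$, which is \eqref{Harnackestimate} after $\varepsilon\to 0$. The main obstacle is the parabolic John--Nirenberg lemma itself: unlike in the elliptic case, the logarithmic estimate is asymmetric in time, so the good-level/bad-level decomposition must be carried out slice by slice and then summed with careful bookkeeping of the time ordering. It is precisely this asymmetry that forces $R^*$ to sit before $R^-$ in time, and the relative size conditions $\tau_0<\tau_1<\tau_2<\tau_3<\tau$ furnish the time gap needed to execute that decomposition with favorable signs.
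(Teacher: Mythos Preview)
The paper does not give a proof of this statement at all: it is quoted verbatim as Theorem~1.2 of Trudinger \cite{Tr} and used as a black box. There is therefore nothing in the paper to compare your argument against.

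That said, what you sketch is precisely the Moser--Trudinger scheme that the cited reference itself carries out (for general divergence-form parabolic operators): Caccioppoli estimates for powers $(v+\varepsilon)^q$, Moser iteration separately for $q<0$ and $0<q<1$, a logarithmic estimate from the test function $\phi^2/(v+\varepsilon)$, and the parabolic John--Nirenberg/Bombieri--Giusti bridge between the two exponent ranges. Your reading of the time ordering is also correct: with $0<\tau_0<\tau_1<\tau_2<\tau_3<\tau$ the rectangle $R^*$ lies in the past of $R^-$, which is the direction forced by the one-sided logarithmic estimate. As a roadmap this is sound; the only place requiring genuine care is, as you note, the parabolic John--Nirenberg step, where the asymmetry in time has to be handled by the level-set/abstract-lemma machinery rather than the elliptic argument.
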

\begin{remark}
Theorem \ref{weakHarnack} was proved in \cite{Tr} for more general divergence operators.
\end{remark}
Note also that Theorem \ref{weakHarnack} can be generalized for sets of the form $B_\rho(x_0)\times I$, where $B_\rho(x_0)$ is the ball of center $x_0$ and radius $\rho$ and $I$ is a time interval. Moreover by taking $x_0=0$, $t_0=1$, $\rho=\rho''=2$, $\rho'=1$, $\tau=1/2$, $\tau_2=1/3$, $\tau_1=1/4$ and $\tau_0=1/8$ we can rewrite the estimate \eqref{Harnackestimate} as
\begin{align}\label{Harnackestimate1}
\fint_{B_1\times(-1,-1/3)} v(x,t)dxdt\leq C\min_{B\times (0,1/2)}v,
\end{align}
where we denote
\[
\fint_{B\times I}v(x,t) dxdt=\frac{1}{|B\times I|} \int v(x,t) dxdt
\]

Let us denote $B^*=B_1\times(-1,-1/3)$ and $B^-= B\times (0,1/2)$. Now we are in shape to proceed as in \cite{Caffarelli-Systems}.
\begin{lemma}\label{1stcontrol}
Let $u$ be a weak solution to 
\begin{equation}\label{fd1}
u_t - \Delta u= f(x, u, \nabla u)
\end{equation}

 in $B_2(0)\times (-1,1)$ such that  $||u||_{L^{\infty}} =M<1$. Moreover $f$ satisfies 
 \begin{equation}\label{fbd}
 \begin{cases}
 |f(x,u, \nabla u)| \leq |\nabla u|^2 
 \\
 |u f(x, u, \nabla u)|\leq l |\nabla u|^2
 \end{cases}
 \end{equation}
 for some $l <1$. 
 
  Then there exists a constant $0<\delta(l)<1$ (monotone decreasing in $l$)  such that 
\[
u(B^-)\subset B_{M(1-\delta)}(\delta\bar u),
\]
where
\[
\bar u=\frac{1}{|B^*|}\int_{B^*}u dxdt=\fint_{B^*}udxdt.
\]
\end{lemma}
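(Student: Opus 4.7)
The strategy, following Caffarelli's elliptic argument \cite{Caffarelli-Systems} adapted to the parabolic setting, is to extract from the vector system a scalar nonnegative supersolution of the heat equation, apply Trudinger's weak Harnack inequality \eqref{Harnackestimate1}, and translate the resulting pointwise bound into the claimed geometric inclusion by elementary algebra.

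The natural scalar auxiliary is $v := M^2 - |u|^2 \geq 0$. Differentiating $|u|^2$, using the equation \eqref{fd1}, and invoking the structural bound $|u\cdot f|\leq l |\nabla u|^2$ with $l<1$, one computes
\[
v_t - \Delta v \;=\; 2|\nabla u|^2 - 2 u \cdot f \;\geq\; 2(1-l)|\nabla u|^2 \;\geq\; 0,
\]
so $v$ is a nonnegative bounded weak supersolution of the heat equation on $B_2\times(-1,1]$. Applying \eqref{Harnackestimate1} to $v$ yields
\[
M^2 - |u(x,t)|^2 \;\geq\; \frac{1}{C}\left(M^2 - \fint_{B^*} |u|^2\right),\qquad (x,t)\in B^-.
\]

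To convert this into the inclusion, I would use the elementary identity
\[
M^2(1-\delta)^2 - |u-\delta\bar u|^2 \;=\; (M^2-|u|^2) - 2\delta(M^2 - u\cdot\bar u) + \delta^2(M^2-|\bar u|^2),
\]
whose right-hand side must be shown to be nonnegative on $B^-$. The first summand is controlled by the Harnack estimate, the third is automatically nonnegative because $|\bar u|\leq M$, and the task reduces to dominating the middle one. A natural way to do this uniformly is to enlarge $v$ to
\[
W \;:=\; (M^2 - |u|^2) + \lambda(M^2 - u\cdot\bar u).
\]
Since $\bar u$ is a constant vector, $(u\cdot\bar u)_t - \Delta(u\cdot\bar u) = \bar u\cdot f$ with $|\bar u\cdot f|\leq M|\nabla u|^2$, and hence $W\geq 0$ remains a nonnegative supersolution as long as $\lambda = \lambda(l)\leq 2(1-l)/M$. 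Applying \eqref{Harnackestimate1} to $W$ produces a joint lower bound on the two slacks $M^2-|u|^2$ and $M^2-u\cdot\bar u$ at every point of $B^-$ in terms of their averages on $B^*$; combined with the identity above and a suitable choice of $\delta=\delta(l)$---small and monotone decreasing in $l$---this is expected to deliver the claimed inclusion.

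The main obstacle I anticipate is the degenerate regime in which $\fint_{B^*}|u|^2$ is close to $M^2$, because the Harnack lower bounds for $v$ and $W$ both become vacuous. Heuristically this regime forces $u$ to be nearly constant: the strong maximum principle applied to the subsolution $|u|^2$ shows that $|u|\equiv M$ implies, through the equation and $M<1$, that $(1-M^2)|\nabla u|^2 = 0$, hence $u\equiv \bar u$. A quantitative version---for instance a Caccioppoli-type bound $\iint_{B^*}|\nabla u|^2 \lesssim M^2 - \fint_{B^*}|u|^2$ obtained by testing the supersolution inequality for $v$ against a smooth cutoff, combined with a parabolic Poincar\'e inequality controlling $\fint|u-\bar u|^2$---is the delicate step needed to close the argument in this regime.
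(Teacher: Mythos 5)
Your proposal starts with the right building block --- $v = M^2 - |u|^2$ is indeed a nonnegative supersolution of the heat equation, and the instinct to add a linear term in $u$ is sound --- but it has a genuine gap exactly where you flagged it, and the gap is not a technical one that a Caccioppoli--Poincar\'e estimate can close; it is a missing structural idea.

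The paper's proof introduces, for \emph{every} fixed $\xi\in\R^m$ with $|\xi|\leq 1-l$, the auxiliary supersolution
\[
h_\xi = \tfrac{1}{2}M^2 + (1-l)M - \tfrac{1}{2}|u|^2 - \xi\cdot u \geq 0,
\]
and applies the weak Harnack inequality to $h_\xi$. Because the Harnack constant is \emph{uniform in $\xi$}, one obtains $h_\xi(x,t) \geq c\,\bar h_\xi \geq c\bigl((1-l)M - \xi\cdot\bar u\bigr)$ on $B^-$ simultaneously for all admissible $\xi$. The crucial step is then to fix a point $(x_0,t_0)\in B^-$ and choose $\xi = (1-l)\,u(x_0,t_0)/|u(x_0,t_0)|$ \emph{depending on the point}. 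This produces an estimate involving the angle $\theta$ between $u(x_0,t_0)$ and $\bar u$, and the short algebraic manipulation (multiply by $r=|u|/M$, complete the square) converts it into the inclusion $|u - \delta\bar u|\leq M(1-\delta)$. The degeneracy you worry about never arises: the average $\bar h_\xi \geq (1-l)M\bigl(1 - \tfrac{|\bar u|}{M}\cos\theta\bigr)$ is strictly positive whenever $u(x_0,t_0)$ is not exactly aligned with $\bar u$ at full length, and this misalignment-sensitive lower bound --- made possible by the additive constant $(1-l)M$ in $h_\xi$ --- is exactly what feeds the contraction.

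Your proposal instead applies the Harnack inequality only to two \emph{fixed} supersolutions, $v$ and $W = v + \lambda(M^2 - u\cdot\bar u)$, the latter being $h_\xi$ with $\xi = \lambda\bar u$ frozen (up to constants). This discards the pointwise optimization over directions, which is the crux of Caffarelli's trick. Concretely, your identity
\[
M^2(1-\delta)^2 - |u-\delta\bar u|^2 = (M^2-|u|^2) - 2\delta(M^2 - u\cdot\bar u) + \delta^2(M^2-|\bar u|^2)
\]
requires an \emph{upper} bound on $M^2 - u\cdot\bar u$ on $B^-$, whereas Harnack applied to $W$ gives a \emph{lower} bound on $(M^2-|u|^2) + \lambda(M^2-u\cdot\bar u)$ --- the wrong direction for the middle term. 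When $\fint_{B^*}|u|^2$ approaches $M^2$ the bound from $v$ degenerates and, since $\bar u$ is fixed in advance, no amount of Caccioppoli/Poincar\'e will recover the directional information at each $(x_0,t_0)$ that the variable-$\xi$ argument encodes. The missing idea, in short, is to run the Harnack inequality over the entire family $\{h_\xi\}_{|\xi|\leq 1-l}$ with a uniform constant and to select $\xi$ after the point is fixed.
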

\begin{proof}
As in \cite{Caffarelli-Systems} the strategy revolves in using $|u|^2$ as a supersolution of a linear scalar equation. 

First note that using \eqref{fbd}, 
\begin{align*}
\left(\frac{\partial}{\partial t}-\Delta\right)(1/2|u|^2)&=u\cdot u_t-u\cdot\Delta u -|\nabla u|^2\\
&\leq (l-1)|\nabla u |^2.
\end{align*}
Now, let $\xi\in\R^m$ with $|\xi|\leq (1-l)$ and note that
\begin{align*}
\left(\frac{\partial}{\partial t}-\Delta\right)(\xi\cdot u)&=\xi\cdot f(x, u, \nabla u)\\
&\leq (1-l)|\nabla u|^2.
\end{align*}
The previous inequality leads to
 \[
\left(\frac{\partial}{\partial t}-\Delta\right)(-1/2|u|^2-\xi\cdot u)\geq 0
\]
Recall now that $u$ is bounded by $M$, therefore  
\[
h(x)=\frac{1}{2}M^2+(1-l)M-\frac{1}{2}|u|^2-\xi\cdot u,
\]
is nonnegative supersolution of the scalar heat equation in $B_2\times(-1,1)$ and therefore we are shape to apply Theorem \ref{weakHarnack}. Note also that
\begin{align*}
\bar h:=\fint_{B^*} h(x,t)&=\frac{1}{2}M^2+(1-l)M-\frac{1}{2}\fint_{B^*}|u|^2dxdt-\xi\cdot\fint_{B^*}udxdt\\
&\geq (1-l)M-\xi\cdot \bar u.
\end{align*} 
We apply now the weak Harnack inequality to $h$ (see \eqref{Harnackestimate1}) to conclude that there exists a constant $C$  such that
\begin{align*}
Ch(x)&\geq \bar h\\
&\geq (1-l)M-\xi\cdot \bar u
\end{align*}
for $x\in B^-$, or equivalently
\[
h(x)\geq c((1-l)M-\xi\cdot \bar u),
\]
for a universal constant $c$ that we can assume smaller than 1 ($c<1$). Now, using the definition of $h$ we get
\begin{align}\label{control1}
\frac{M^2-|u|^2}{2}+(1-l)M-\xi\cdot u\geq c ((1-l)M-\xi\cdot\bar u). 
\end{align}
Fix $(x,t)\in B^-$ and take $\xi$ in the direction of $u(x,t)$, more precisely let $\xi$ be given by
\[
\xi=(1-l)\frac{u}{|u|},
\]
and denote by $\theta$ the angle between $u$ and $\bar u$ and let $r=|u|/M$. Note that with this selection of parameters we have
\[
(1-l)M-\xi\cdot u=M(1-l)(1-r)
\]
and therefore, from \eqref{control1}, we get
\begin{align*}
M(1-r)\left( \frac{1}{2}(M+|u|)+(1-l)\right)&\geq c\left((1-l)M-\xi\cdot\bar u\right)\\
&=cM(1-l)\left(1-\frac{\cos(\theta)|\bar u|}{M}\right)
\end{align*}
which gives us the control on $r$
\begin{align}\label{control2}
1-r\geq c_1\left(1-\frac{|\bar u|}{M}\cos\theta\right).
\end{align}
 Therefore by multiplying \eqref{control2} by $r$ and adding afterwards $1-r$ we arrive to 
\[
1-r^2\geq c_1\left(1-r\frac{|\bar u|}{M}\cos\theta\right),
\]
which is equivalent to 
\[
r^2-c_1r\frac{|\bar u|}{M}\cos\theta\leq 1-c_1.
\]
Note now that $\bar u/M\leq 1$, therefore from the previous inequality we get
\[
r^2-c_1r\frac{|\bar u|}{M}\cos\theta+\left(\frac{1}{2}c_1\frac{\bar u}{M}\right)^2\leq 1-c_1 +\left(\frac{1}{2}c_1\right)^2,
\]
and by picking $\delta=1/2c_1$ we conclude
\[
|u-\delta\bar u|^2\leq M^2(1-\delta)^2.
\]
which finishes the proof.
\end{proof}
The previous lemma states that $u$ maps $B^*=B_1\times(0,1/2)$ to a ball of strictly smaller radius than 1 and center shifted toward $\bar u$. This result will allow us to control the oscillation of the function via dilations. 

A direct consequence of the previous lemma is the following corollary.
\begin{corollary}\label{corocontrol1}
Let $u$ be as in Lemma \ref{control1}. Then there exist a sequence of points $\{\rho_k\}$ and radii $\{M_k\}$ such that 
\begin{itemize}
\item[i.-] $M_k\leq M(1-\delta)^k$.
\item[ii.-] $|\rho_k|+M_k\leq M$.
\item[iii.-] $u(B_{2^{-k}}(0)\times I_k)\subset B_{M_k}(\rho_k) $, where $I_k=(a_k,b_k)$ is an interval given by the induction process
\begin{align*}
a_0&=0, \ b_0=1/2, \\
a_k&=\frac{a_{k-1}+b_{k-1}}{2}, \ b_k=\frac{a_{k-1}+b_{k-1}}{2}+\frac{b_{k-1}-a_{k-1}}{4}.
\end{align*}
\end{itemize}
\end{corollary}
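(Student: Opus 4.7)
The plan is to prove all three conditions simultaneously by induction on $k$, applying Lemma \ref{1stcontrol} at each step to a parabolic rescaling of $u$. For the base case $k = 0$, I would set $M_0 = M$, $\rho_0 = 0$, noting that $I_0 = (0, 1/2)$ so that (iii) reduces to the $L^\infty$ bound on $u$, and (i), (ii) are immediate.

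For the inductive step, suppose the three conditions hold at level $k$. The idea is to rescale so that the domain $B_{2^{-k}}(0) \times I_k$ is mapped onto the ambient region $B_2 \times (-1, 1)$ of Lemma \ref{1stcontrol}, and the image (known to lie in $B_{M_k}(\rho_k)$) is normalized into the unit ball. Since the recursion gives $b_k - a_k = 2^{-2k-1}$, the parabolically consistent choice is
$$v(y, s) = \frac{u\!\left(2^{-k-1} y,\, a_{k+1} + 2^{-2k-2} s\right) - \rho_k}{M_k}, \qquad (y,s)\in B_2 \times (-1,1),$$
where $a_{k+1} = (a_k + b_k)/2$ is the midpoint of $I_k$. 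By the inductive hypothesis $\|v\|_\infty \leq 1$, and a direct parabolic-scaling computation shows that $v$ solves
$$v_s - \Delta v = \tilde f(y, v, Dv), \qquad \tilde f := M_k\, (M_k v + \rho_k)\,|Dv|^2.$$
The key observation is that $M_k v + \rho_k$ equals the value of the original $u$ at the corresponding point, hence $|M_k v + \rho_k| \leq M$. Consequently $|\tilde f| \leq M\cdot M_k |Dv|^2 \leq |Dv|^2$ and $|v\tilde f| \leq M^2 |Dv|^2$, so $\tilde f$ satisfies \eqref{fbd} with the \emph{fixed} constant $l = M^2 < 1$, independent of $k$.

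Applying Lemma \ref{1stcontrol} to $v$ therefore yields a uniform $\delta = \delta(M^2) > 0$ and the inclusion $v(B^-) \subset B_{1-\delta}(\delta \bar v)$, with $\bar v = \fint_{B^*} v$. Setting $M_{k+1} := M_k(1 - \delta)$ and $\rho_{k+1} := \rho_k + M_k \delta \bar v$, and reversing the change of variables, the inclusion transports to $u(B_{2^{-(k+1)}} \times I_{k+1}) \subset B_{M_{k+1}}(\rho_{k+1})$, where a quick arithmetic check confirms that the image of the time interval $(0,1/2)$ under the scaling is exactly $(a_{k+1}, a_{k+1} + 2^{-2k-3}) = I_{k+1}$ from the recursion. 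Property (i) is immediate, and (ii) follows from $|\bar v| \leq \|v\|_\infty \leq 1$ by
$$|\rho_{k+1}| + M_{k+1} \;\leq\; |\rho_k| + M_k \delta + M_k(1-\delta) \;=\; |\rho_k| + M_k \;\leq\; M.$$

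The one delicate point is ensuring that the constant $\delta$ does not degenerate as $k$ grows; this is exactly why the rescaling is engineered so that the structural parameter $l$ in \eqref{fbd} is the fixed quantity $M^2 < 1$ throughout the iteration, rather than a quantity that tends to $1$ as $\|v\|_\infty \to 1$. The other book-keeping items (the consistency of the parabolic scaling factors $2^{-k-1}$ and $2^{-2k-2}$, and the fact that the time interval produced by the pullback matches the recursive definition of $I_{k+1}$) are routine.
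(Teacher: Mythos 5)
Your proof is correct and takes essentially the same approach as the paper's: an induction on $k$ that applies Lemma~\ref{1stcontrol} to a parabolic rescaling of $u$, the key point being that the structural constant $l$ in \eqref{fbd} stays uniformly below $1$ along the iteration (you obtain $l=M^2$, the paper uses $l=M$; both work since $\|u\|_\infty=M<1$). The only differences are cosmetic: you normalize the rescaled map by $M_k$ so it lands in the unit ball rather than working with $u_k=u(\cdot)-\rho_{k-1}$ bounded by $M_{k-1}$, and your base case is off-by-one (you take $M_0=M,\ \rho_0=0$ whereas the paper already applies the lemma at $k=0$).
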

\begin{proof}
We proceed by induction on $k$. Note the case $k=0$ is just Lemma \ref{control1} with $\rho_0=\delta\bar u$ and $M_0=M(1-\delta)$. Let $u_k=u(2^{-k}x,2^{-2k}(t-a_k) )-\rho_{k-1}$ and assume the result holds up to $k-1$. In order to apply Lemma \ref{control1} to $u_k$ in $B_2\times(-1,1)$ we first note that 
\[
\|u_k\|_{L^\infty(B_2(0)\times(-1,1))}\leq M_{k-1}
\]
and solves an equation of the type \eqref{fd1} with $l=M$. 
 Therefore we can apply Lemma \ref{control1} to $u_k$, which finishes the proof by letting $M_k=M_{k-1}(1-\delta)$, $\rho_k=\rho_{k-1}+\delta\bar u_k$ and observing that 
\[
M_k+\rho_k=M_{k-1}+\rho_{k-1}+\delta(\bar u_k-M_{k-1})\leq M
\]
\end{proof}

We point out that Lemma \ref{corocontrol1} gives us a control on the oscillation of the solution of \eqref{parabolicsystem}. More precisely we get that $u$ is H\"older continuous at the point $(0, t^*)$, where $t^*$ is given by $\cap I_k$. Furthermore, since the equation is translation invariant one can repeat the argument to conclude that the solution is H\"older in the interior of the domain. The higher order regularity follows now from standard techniques, see e.g. \cite{HW,bookLW, CWY}.

\section{The nonlocal cases: Proof of the H\"older regularity}\label{SecparNL}

\subsection{The case of the fractional heat operator}
We begin this section with some preliminary facts. Recall first 
\[
\cB(u,w)=\frac{(1-s)c_n}{2}\int_{\R^n}\frac{(u(x)-u(y))\cdot(w(x)-w(y))} {|x-y|^{n+2s}}\,dy. 
\] 
Let $v:\R^n\to\R$ be a smooth bounded function, then we claim that
\[
-L v^2(x)=-2v(x)L_Kv(x)-2\cB(v,v),
\]
A direct algebraic manipulation gives 
\[
(-\Delta)^su^2=2u(-\Delta)^su+2\cB(u),
\]
where we are denoting $\cB(u,u)=\cB(u)$. Note that $\cB(u)$ has the same scaling as the fractional laplacian, that is, for $\lambda\in\R$ we have $\cB(u_\lambda))(x)=\lambda^{2s}\cB(u)(\lambda x)$, where $u_\lambda(x)=u(\lambda x)$. Moreover $\cB(u,v)$ plays the role of $Du\cdot Dv$.

We are interested in studying interior regularity for solutions of the fractional parabolic system 
\begin{align}\label{frac1}
u_t+(-\Delta)^s u = u\cB(u)
\end{align}
For the time independent case the previous system was studied in \cite{CaD} based on the techniques developed in \cite{Caffarelli-Systems}. One of the key elements in the proof is the weak Harnack inequality.

We point out that a fractional version of \eqref{weakHarnack} exists for weak and viscosity supersolutions of the fractional heat equation (see for example \cite{FK} for weak solutions and \cite{CD} for viscosity ones). In the context of the present work, we however need the following weak Harnack inequality with "tail" as established in \cite{St} which doesn't require the supersolution to be globally positive (See Theorem 1.2 in \cite{St}).  \begin{theorem}\label{fracweakHarnack}
Let $v$ be a weak supersolution of 
\[
v_t+(-\Delta)^s v=0\ \text{in $\R^n \times (t_0 - 4r^{2s}, t_0]$}
\]
such that $v \geq 0$ in  $\Omega=B_R(x_0)\times(t_0-2r^{2s},t_0]$.  Let $0<r<R/2$. Then there exists a constant $C$ such that 
\begin{align}\label{fracHarnackestimate}
\fint_{U^-}vdxdt\leq C\inf\limits_{U^+} v(x,t)+C\left(\frac{r}{R}\right)^{2s}\text{Tail}_\infty(v^-;x_0,R,t_0-2r^{2s}, t_0),
\end{align}
where \[
U^-=B_r(x_0)\times(t_0-2r^{2s},t_0-r^{2s}], \ \  U^+=B_r(x_0)\times(t_0- \frac{r^{2s}}{2},t_0].
\]
and 
\[
\text{Tail}_\infty(v;x_0,R,t_1, t_2)=R^{2s}\sup\limits_{t_1<t<t_2}\int_{\R^n\sm B_R(x_0)}\frac{|v(x,t)|dxdt}{|x-x_0|^{n+2s}}
\]
\end{theorem}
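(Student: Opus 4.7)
The plan is to follow the classical Moser/De Giorgi program adapted to the nonlocal parabolic setting, in the spirit of Kassmann and Di Castro--Kuusi--Palatucci, with the time-dependent tail book-keeping of Str\"omqvist. The argument splits into four steps: a nonlocal parabolic Caccioppoli-type energy estimate with a tail remainder, a Moser iteration yielding an $L^p$--$L^\infty$ comparison for nonnegative supersolutions (again up to a tail), a logarithmic estimate giving a BMO-type bound for $\log(v+d)$, and finally a Bombieri--Giusti crossover lemma to connect the two.

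The first step is to test the weak formulation of $v_t+(-\Delta)^s v\geq 0$ against $\eta^2(v+d)^{-q}$, with $\eta$ a smooth spatial cutoff supported in $B_R(x_0)$ and $d>0$ a small parameter added to avoid degeneration; Steklov averaging in time is needed to justify the manipulation. The decisive point is to split the resulting nonlocal bilinear term
\[
\iint \frac{(v(x)-v(y))\bigl(\eta^2(v+d)^{-q}(x)-\eta^2(v+d)^{-q}(y)\bigr)}{|x-y|^{n+2s}}\,dx\,dy
\]
into an interior part on $B_R\times B_R$, which produces a positive Gagliardo seminorm of a power of $v+d$, and a boundary part on $B_R\times(\R^n\setminus B_R)$; using $v\geq 0$ on $\Omega$ and the elementary bound $v(y)\geq -v^{-}(y)$ for $y$ outside $B_R$, the latter is absorbed into $(r/R)^{2s}\,\text{Tail}_\infty(v^{-};x_0,R,\cdot)$, which explains why only the negative part of $v$ enters the final inequality.

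Combining this Caccioppoli estimate with the fractional Sobolev embedding, one runs a standard Moser iteration in the supersolution direction ($q>0$) on a chain of shrinking cylinders, producing
\[
\inf_{U^{+}} v \, \geq \, c\Bigl(\fint_{U'} v^{p_0}\,dx\,dt\Bigr)^{1/p_0} - C\Bigl(\frac{r}{R}\Bigr)^{2s}\text{Tail}_\infty(v^{-};x_0,R,\cdot)
\]
for some small $p_0>0$ and an intermediate cylinder $U'\subset\Omega$. In parallel, the borderline test function $\eta^2(v+d)^{-1}$ produces a Caccioppoli inequality for $\log(v+d)$ which, combined with parabolic John--Nirenberg on cylinders, gives an $L^{p_0}$ self-improvement of $\log v$ on $U^{-}$. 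A Bombieri--Giusti covering argument then glues the two estimates and converts the $L^{p_0}$ mean of $v$ into the $L^1$ mean on $U^{-}$, completing \eqref{fracHarnackestimate}.

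The main obstacle, and the genuine content beyond the local Moser theory, is the tail bookkeeping. Because only $v^{-}$ is allowed on the right, and because the time direction is irreversible, one must pair the Steklov-mollified formulation with cutoffs monotone in time and verify that, whenever the test function is truncated at a level set $\{v<k\}$, the discarded contribution is either nonpositive inside $\Omega$ or is controlled by the tail of $v^{-}$ outside $B_R$. Once this splitting is carried out cleanly the remainder of the iteration is essentially mechanical, and the balance $r<R/2$ is exactly what is needed so that the tail prefactor $(r/R)^{2s}$ can be made small.
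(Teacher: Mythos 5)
The paper does not contain its own proof of Theorem~\ref{fracweakHarnack}: it is quoted directly from Str\"omqvist's Theorem~1.2 in \cite{St}, and the authors explicitly refer the reader there, so there is no in-paper argument to compare against. That said, your outline is a faithful high-level account of the strategy actually carried out in \cite{St} (and, more broadly, of the parabolic nonlocal De~Giorgi--Nash--Moser program going back to Felsinger--Kassmann \cite{FK} and Di~Castro--Kuusi--Palatucci \cite{DKP}): a Caccioppoli estimate with a tail remainder from testing the Steklov-averaged weak formulation against $\eta^2(v+d)^{-q}$; the split of the nonlocal bilinear form into a near part (yielding a Gagliardo seminorm of a power of $v+d$) and a far part absorbed into $\text{Tail}_\infty(v^-;\cdot)$, exploiting that $v\geq 0$ only locally; Moser iteration on negative powers; a logarithmic Caccioppoli estimate combined with parabolic John--Nirenberg; and a Bombieri--Giusti crossover to return from $L^{p_0}$ to $L^1$ on $U^-$.

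Two remarks on the sketch itself. First, the hypothesis $r<R/2$ is not there so that ``$(r/R)^{2s}$ can be made small'': that prefactor simply sits on the right-hand side however large or small it is, and the restriction $r<R/2$ is geometric, giving room for the chain of intermediate cutoff radii between $r$ and $R$. Second, what you present is an architecture rather than a proof; the actual work, and what distinguishes the parabolic case from the elliptic one, lies in (a) choosing the cutoffs monotone in time so that the Steklov mollification respects the forward time orientation, (b) checking at each step of the Moser chain that the far-field contribution of the truncated test function remains bounded by the tail of $v^-$ alone, since $v$ is nonnegative inside $B_R$ and the contribution of $v^+$ from outside has the favourable sign, and (c) tracking the $\sup_t$ appearing in $\text{Tail}_\infty$ through the time slicing and the Bombieri--Giusti covering. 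Those are exactly the technical points that \cite{St} resolves, and a complete proof would have to carry them out; as written, the sketch correctly names them as the obstacle without filling the gap.
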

In what follows we will always take $(x_0, t_0) =0$.

 
In order to prove the next result, we study an auxiliary problem. Let us consider the parabolic system
\begin{align}\label{auxcut}
\left(\frac{\partial}{\partial t}+(-\Delta)^s\right)w(x,t)=f(x,w,\cB(w)),
\end{align}
and assume
\begin{itemize}
\item[(H1.1)] {\it Small $2s$ growth:} There exists a  constant $a$ such that
\[
|f(x,w(x),\cB(w(x),w(x)))|\leq a\cB(w(x),w(x))
\]
for all maps $w:\R^n\to\R^m$.
\item[(H1.2)] There exists another constant   $l$ such that
\[
w(x)\cdot f(x,w(x),\cB(w(x),w(x)))\leq l\cB(w(x),w(x)),
\]
 for all  maps $w:\R^n\to\R^m$.
\end{itemize}
From now on and without loss of generality we assume that $a=1$.
\begin{lemma}\label{1stcontrolfraccorrected}
Let $w$ be a bounded weak solution to \eqref{auxcut} in $\R^n \times (-1,1)$ with $f$ satisfying (H1.1), (H1.2).  Assume that $l<1$, and also that   $M=\|w\|_{L^\infty(\R^n \times (-1,1))} <1$. Then there exist constants $0<\delta(l)<1$ and $\tau$ such that for  any radius $r< \frac{1}{2}$  we have that
\[
w(U^+)\subset B_{M(1-\delta)}(\delta\bar w),
\]
where
\[
\bar w=\frac{1}{|U^-|}\int_{U^-}u dxdt=\fint_{U^-}wdxdt.
\]
Furthermore $\delta$ is monotone decreasing in $l$. 
\end{lemma}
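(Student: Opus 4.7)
The plan is to mimic the strategy of Lemma~\ref{1stcontrol}, building an auxiliary nonnegative global supersolution of the fractional heat equation and applying the weak Harnack with tail (Theorem~\ref{fracweakHarnack}) in place of Theorem~\ref{weakHarnack}. The starting point is the algebraic identity
\[
(-\Delta)^s \tfrac{|w|^2}{2} = w\cdot(-\Delta)^s w - \cB(w,w),
\]
in which $\cB(w,w)\geq 0$ plays exactly the role that $|\nabla w|^2$ played in the local case. Using the equation together with (H1.2) yields
\[
\bigl(\partial_t+(-\Delta)^s\bigr)\tfrac{|w|^2}{2}
= w\cdot f(x,w,\cB(w)) - \cB(w,w) \leq (l-1)\,\cB(w,w) \leq 0,
\]
and for any $\xi\in\R^m$ with $|\xi|\leq 1-l$, hypothesis (H1.1) (with $a=1$) gives $|(\partial_t+(-\Delta)^s)(\xi\cdot w)|=|\xi\cdot f|\leq (1-l)\cB(w,w)$. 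Summing these two estimates with appropriate signs, the function
\[
h(x,t):=\tfrac{M^2}{2}+(1-l)M-\tfrac{|w|^2}{2}-\xi\cdot w
\]
is a supersolution of $h_t+(-\Delta)^s h=0$. Because $\|w\|_{L^\infty}\leq M$ in $\R^n\times(-1,1)$ and $|\xi|\leq 1-l$, one has $h\geq 0$ \emph{globally} in the spatial variable throughout the time slab, which is the key input that will kill the tail term in the weak Harnack.

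Next, fix any radius $r<1/2$ small enough that the cylinder $(-4r^{2s},0]$ embeds in $(-1,1)$ (after a harmless time translation). Applying Theorem~\ref{fracweakHarnack} to $h$ at $(x_0,t_0)=(0,0)$ with some ambient radius $R>2r$, the identity $h^-\equiv 0$ makes the $\operatorname{Tail}_\infty$ contribution vanish, producing
\[
\fint_{U^-} h \, dx\, dt \leq C \inf_{U^+} h.
\]
Using $|w|\leq M$ on $U^-$, the left side is bounded below by $(1-l)M-\xi\cdot\bar w$, so at every $(x,t)\in U^+$,
\[
\tfrac{(M-|w|)(M+|w|)}{2}+(1-l)M-\xi\cdot w \;\geq\; c\bigl((1-l)M-\xi\cdot\bar w\bigr)
\]
for a universal $c\in(0,1)$.

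From here the remainder is the exact algebra of Lemma~\ref{1stcontrol}. Fix $(x,t)\in U^+$, set $\xi=(1-l)\,w(x,t)/|w(x,t)|$, write $\rho=|w(x,t)|/M$, and let $\theta$ be the angle between $w(x,t)$ and $\bar w$. The displayed inequality rearranges to
\[
1-\rho \;\geq\; c_1\Bigl(1-\tfrac{|\bar w|}{M}\cos\theta\Bigr),
\]
with $c_1=c_1(l)$ monotone decreasing in $l$ (since the slack factor $(1-l)$ shrinks as $l\uparrow 1$). Multiplying by $\rho$, adding back the previous inequality, and completing the square in the vector form (viewing $w/M$ and $\bar w/M$ as vectors in $\R^m$) yields
\[
|w(x,t)-\delta\bar w|^2 \leq M^2(1-\delta)^2, \qquad \delta:=c_1/2\in(0,1),
\]
which is the claim.

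The principal obstacle is nonlocality. For a generic positive supersolution of the fractional heat equation, Theorem~\ref{fracweakHarnack} comes with a tail correction $\operatorname{Tail}_\infty(h^-)$ depending on the behavior of $h$ at spatial infinity, which is not controlled in terms of the local geometry of $w$. The specific choice of additive constant $\tfrac{M^2}{2}+(1-l)M$ in $h$ is exactly what enforces global nonnegativity (it uses \emph{both} the smallness $M<1$ and the subcritical slack $l<1$), eliminates the tail, and reduces the problem to the same structure as Lemma~\ref{1stcontrol}. The compatibility of the Harnack cylinder with the time slab is a minor technical restriction on $r$ that does not affect the existence or monotonicity of $\delta$.
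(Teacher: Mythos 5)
Your proposal is correct and follows essentially the same route as the paper's own proof: construct the same auxiliary function $h=\tfrac{M^2}{2}+(1-l)M-\tfrac{|w|^2}{2}-\xi\cdot w$, verify via (H1.1)--(H1.2) that it is a global nonnegative supersolution of $\partial_t+(-\Delta)^s$, invoke Theorem~\ref{fracweakHarnack} with the observation that $h^-\equiv 0$ so the tail term vanishes, and then run the identical pointwise algebra from Lemma~\ref{1stcontrol}. One small inaccuracy in your closing commentary: global nonnegativity of $h$ follows from $|\xi|\leq 1-l$ and $\|w\|_{\infty}\leq M$ alone (hence only $l<1$ is needed there, to make the set of admissible $\xi$ nontrivial); the hypothesis $M<1$ enters later, in ensuring the conclusion $w(U^+)\subset B_{M(1-\delta)}(\delta\bar w)$ is meaningful and in the iteration of Lemma~\ref{corocontract}, not in killing the tail.
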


\begin{proof}

Define 
\[
h(x,t)=\frac{1}{2}M^2+(1-l)M-\frac{1}{2}|w|^2-\xi\cdot w,
\] 
and observe that $h$ is a nonnegative supersolution  in $\R^n \times(-1,1)$. Indeed note first that
\begin{align*}
\left(\frac{\partial}{\partial t}+(-\Delta)^s\right)(1/2|w|^2)&=w\cdot w_t+w\cdot(-\Delta)^s w - \cB(w)\\
&=w\cdot f(x,w,\cB(w))-\cB(w)\\
&\leq (l-1)\cB(w).
\end{align*}
Let $\xi\in\R^m$ with $|\xi|\leq (1-l)$ and note that
\begin{align*}
\left(\frac{\partial}{\partial t}+(-\Delta)^s\right)(\xi\cdot w)&=\xi\cdot f(x,w,\cB(w)))\\
&\leq |\xi|\cB(w)\\
&\leq (1-l)\cB(w).
\end{align*}
 With the previous computations it is direct to verify that 
\[
\left(\frac{\partial}{\partial t}+(-\Delta)^s\right)h\geq 0\quad \text{in }\R^n \times(-1,1),
\]
and $h\geq 0$ in $\R^n\times[-1,1]$. Note also that
\begin{align*}
\bar h:=\fint_{U^-} h(x,t)&=\frac{1}{2}M^2+(1-l)M-\frac{1}{2}\fint_{U^-}|w|^2dxdt-\xi\cdot\fint_{U^-}wdxdt \\
&\geq (1-l)M-\xi\cdot \bar w,
\end{align*} 
therefore, we deduce
\[
\bar h\geq (1-l)M-\xi\cdot \bar w.
\] 

We can now apply the weak harnack as in Lemma \ref{fracweakHarnack} ( note that the $\text{Tail}_{\infty} (h^{-}, \cdot) \equiv 0$ in this case)  and repeat the arguments as in the proof of  Lemma \ref{1stcontrol} to obtain the desired conclusion. 

\end{proof}


We now establish the following oscillation decay for solutions to \eqref{frac1}.  Note that in the nonlocal framework, the classical oscillation decay method does not work because of the "tail" effects. To overcome this issue, this is precisely why we need to have at our disposal  a Harnack inequality with a  tail as in Theorem \ref{fracweakHarnack} above.  As the reader will see, the proof of the  oscillation decay  at successive steps as in  Lemma \ref{corocontract} below  requires  certain delicate adaptation of the local arguments  precisely due to the fact that the rescaled functions are not globally non-negative and thus the "tails" have to be appropriately estimated.

\begin{lemma}\label{corocontract}
Let $u$ be a solution to \eqref{frac1} and let $||u||_{L^{\infty}(\R^n \times (-1, 1))}= M<1$. There there exists $\alpha, r \in (0,1)$ and a sequence $\{\rho_k\}$  such that 
\begin{itemize}
\item[i.-]$M_k\leq M r^{k\alpha}$,
\item[ii.-] $|\rho_{k}|+M_{k-1}\leq M$,
\item[iii.-] $u(B_{r^k} \times (- r^{2sk}, 0)) \subset B_{M_k}(\rho_k)$,
\item[iv.-] $|\rho_k - \rho_i| \leq C_0 Mr^{i\alpha}$, for all $i < k$,
\end{itemize}
where $C_0$ is some universal constant.

\end{lemma}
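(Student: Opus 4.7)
The proof is by induction on $k$, paralleling Corollary \ref{corocontrol1} but carefully tracking the nonlocal tail at each rescaling step. For the base case $k = 1$ (with $\rho_0 := 0$, $M_0 := M$), Lemma \ref{1stcontrolfraccorrected} applies directly to $u$ with $l = M < 1$: since $u$ is globally bounded by $M$, the tail in Theorem \ref{fracweakHarnack} vanishes and we obtain $u(U^+)\subset B_{M(1-\delta)}(\delta\bar u)$. Set $\rho_1 := \delta \bar u$ and $M_1 := M(1-\delta)$, and choose $r \in (0,1)$ small (to be quantified) and $\alpha \in (0, 2s)$ satisfying $r^\alpha = 1 - \delta$; this pairing is possible precisely because $\alpha < 2s$ amounts to $r < (1-\delta)^{1/(2s)}$.

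For the inductive step, assume (i)--(iv) hold at all levels $\leq k-1$, and introduce the rescaled, shifted function
$$u_k(x,t) := u(r^k x,\, r^{2sk} t) - \rho_{k-1}.$$
Scale invariance of $(-\Delta)^s$ together with the translation invariance of $\cB$ show that $u_k$ satisfies a system of the form $\partial_t u_k + (-\Delta)^s u_k = (u_k + \rho_{k-1})\,\cB(u_k)$ on $\R^n \times (-r^{-2sk}, 0]$. By hypothesis (iii), $|u_k| \leq M_{k-1}$ on $B_{1/r} \times (-r^{-2s}, 0]$; globally one has only the crude bound $|u_k| \leq 2M$. Mimicking the proof of Lemma \ref{1stcontrolfraccorrected}, introduce
$$h_k(x,t) := \tfrac{1}{2} M_{k-1}^2 + (1 - l) M_{k-1} - \tfrac{1}{2}|u_k|^2 - \xi \cdot u_k,$$
with $|\xi| \leq 1 - l$ and $l = M$. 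The same algebra yields $(\partial_t + (-\Delta)^s) h_k \geq 0$ wherever $u_k$ solves the PDE, and $h_k \geq 0$ on the local cylinder $B_{1/r} \times (-r^{-2s}, 0]$.

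The main step is to apply Theorem \ref{fracweakHarnack} to $h_k$ with Harnack radius $1$ and exclusion radius $R = 1/r$. The main term is $\fint_{U^-} h_k \geq (1 - l) M_{k-1} - \xi \cdot \bar u_k$, exactly as in Lemma \ref{1stcontrolfraccorrected}. The delicate point is the tail: for a point $y$ with $|y| \in (r^{-j}, r^{-(j+1)})$, $1 \leq j \leq k - 1$, the point $r^k y$ lies in the original annulus $B_{r^{k-j}} \setminus B_{r^{k-j-1}}$, so by (iii) at level $i := k - j - 1$ combined with (iv),
$$|u_k(y, t)| \leq M_i + |\rho_i - \rho_{k-1}| \leq (1 + C_0)\, M\, r^{i\alpha}.$$
Combining this with $|h_k^-| \leq \tfrac{1}{2}|u_k|^2 + |u_k| \leq 2 |u_k|$ and $\int_{r^{-j} < |y| \leq r^{-(j+1)}} |y|^{-n - 2s}\,dy \sim r^{2sj}$, and summing the resulting geometric series (the dominant term is $j = 1$, $i = k - 2$), one arrives at
$$r^{2s}\, \text{Tail}_\infty(h_k^-; 0, r^{-1}, \cdot) \leq C (1 + C_0)\, M_{k-1}\, r^{2s - \alpha} + C M\, r^{2sk},$$
which for $\alpha < 2s$ and $r$ small is an arbitrarily small multiple $\varepsilon M_{k-1}$ of $M_{k-1}$.

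Feeding this back into Theorem \ref{fracweakHarnack} gives $\inf_{U^+} h_k \geq c_0\bigl[(1 - l) M_{k-1} - \xi \cdot \bar u_k\bigr] - \varepsilon M_{k-1}$; choosing $\xi = (1 - l) u_k(x,t)/|u_k(x,t)|$ and running the trigonometric computation at the end of the proof of Lemma \ref{1stcontrol} (with $\delta$ diminished slightly to absorb the $\varepsilon$ loss) yields $|u_k - \delta \bar u_k| \leq M_{k-1}(1 - \delta)$ on $U^+$. Set $\rho_k := \rho_{k-1} + \delta \bar u_k$ and $M_k := M_{k-1}(1 - \delta) = M r^{k\alpha}$; unwinding the rescaling establishes (i)--(iii). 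Property (iv) then follows from the telescoping bound
$$|\rho_k - \rho_i| \leq \sum_{j=i+1}^k \delta |\bar u_j| \leq \delta M \sum_{j \geq i+1} r^{(j-1)\alpha} \leq \frac{\delta}{1 - r^\alpha}\, M\, r^{i\alpha},$$
which is consistent if we take $C_0 := \delta/(1 - r^\alpha) = 1$. The principal obstacle is the circular dependency between $C_0$, the tail bound, and the contraction rate: the tail uses $C_0$, but $C_0$ itself depends on $\delta$, which is in turn diminished by the tail-induced error $\varepsilon$. The condition $\alpha < 2s$ provides exactly the small multiplier $r^{2s - \alpha}$ needed to decouple these and make the iteration close, which is also why property (iv), automatic in the local setting, must be built into the induction hypothesis here.
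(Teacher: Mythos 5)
Your proposal follows the same induction--rescale--tail-estimate--weak-Harnack strategy as the paper, and the computations check out; the differences are an index shift (you subtract $\rho_{k-1}$ and invoke Theorem \ref{fracweakHarnack} with exclusion radius $1/r$ and unit Harnack radius, whereas the paper subtracts $\rho_k$ and uses exclusion radius $1$ with small Harnack radius $r$) which amount to the same thing after scaling. One genuine clarification you offer over the paper's write-up is the choice $r^\alpha=1-\delta$, which pins $C_0=\delta/(1-r^\alpha)=1$ and thus cleanly breaks the apparent circular dependence between the constant $C_0$ entering the tail estimate \eqref{k0}--\eqref{k1}, the threshold for $r$ in \eqref{rc}, and the contraction factor $\delta$; the paper takes $\alpha<\log_r(1-\delta)$ strictly (so $C_0>1$ and a priori $r$-dependent) and records $C_0=\delta/(1-r)$, which seems to be a typo for $\delta/(1-r^\alpha)$, without addressing this point.
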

\begin{proof}

We will prove the result by induction. For the initial step we apply Lemma \ref{1stcontrolfraccorrected} for which any choice of $r<1/2$ works.  We now assume that the hypothesis holds up to some $k$ for a choice of $r$ to be determined below ( see \eqref{rc}). We then let
\[
u_{k}= u(r^k x, r^{2sk} t) -\rho_k.
\]
Consequently we have that  $||u||_{L^{\infty}(B_1 \times (-1, 0]) } \leq M_k$.  We also note that $$||u_k||_{L^{\infty}(\R^n \times (-1, 0]))} \leq C_1M$$ for some $C_1$ independent of $k$.  Finally we observe that $u_k$ solves an equation of the type  \eqref{auxcut} with $l=M$.  Subsequently, letting
\[
h(x,t)= \frac{1}{2}  \tilde M_k^2 - (1-l) \tilde M_k - \frac{1}{2} u_k^2 - \xi \cdot u_k,\ \text{$|\xi| \leq 1-l$}
\]
with $\tilde M_k = M r^{k\alpha}$,we note as in the proof of Lemma \ref{1stcontrolfraccorrected}, $h$ is a supersolution  globally to $\partial_t + (-\Delta)^s$ and $h \geq 0$ in $B_1 \times (-1, 0]$.
  Thus by applying the weak harnack inequality to $h$ we obtain,
\begin{equation}\label{h1}
Ch(x,t) + C r^{2s} \text{Tail}_{\infty} (h^{-}, 0, 1, -r^{2s}, 0) \geq (1-l) \tilde M_k + \xi. \bar u_k
\end{equation} 
for all $(x,t) \in B_{r} \times (-r^{2s}, 0]$. Here $\bar u_k = \fint_{B_r \times (-2r^{2s}, -\frac{5}{4} r^{2s} )} u_k$. Now by using the  induction hypothesis and  iv) which holds upto $k$, we note that 
\begin{equation}\label{k0}
||u_k||_{L^{\infty}(B_{r^{i-k}} \times (-r^{2(i-k)s}, 0])}  \leq ||u- \rho_i||_{L^{\infty}(B_{r^i} \times (-r^{2is}, 0])} + |\rho_k - \rho_i| \leq C M r^{i \alpha}
\end{equation}
for all $i \leq k$.   Using \eqref{k0}, change of variable  and by summing  the integral involving the tail over dyadic regions  as in  the proof of Lemma 5.1 in \cite{DKP}, we obtain that  the following holds, 
\begin{align}\label{k1}
& \text{Tail}_{\infty} (h^{-}, 0, 1, -r^{2s}, 0) \leq CM \sum_{i=0}^k \frac{r^{2ks}}{r^{2is}} r^{i\alpha} \leq  CM \sum_{i=0}^k r^{k\alpha}  (\frac{r^k}{r^i})^{2s- \alpha}\\
& \leq  C M r^{k\alpha} \sum_{i=0}^{\infty} r^{i\alpha} \leq  \tilde C M r^{k\alpha}\notag
\end{align}
provided we choose $\alpha < 2s$.  Note that $\tilde C$ is independent of $r$ as long as $r < 1/2$.
 Substituting the estimate in \eqref{k1}  into \eqref{h1}, we obtain that the following holds for a new $C$,
\begin{equation}\label{h2}
Ch(x,t) + C M r^{2s} r^{k\alpha}  \geq (1-l) \tilde M_k + \xi. \bar u_k
\end{equation}
At this point, we choose $r$ small enough such that  
\begin{equation}\label{rc}
Cr^{2s} < \frac{1-l}{2}
\end{equation}
We thus obtain from \eqref{h2} using \eqref{rc} that
\begin{equation}\label{h4}
Ch(x,t)  \geq \frac{1-l}{2}  \tilde M_k + \xi. \bar u_k
\end{equation}
Now by repeating the arguments as in the proof of Lemma \ref{1stcontrol} we obtain for some $\delta >0$  depending on $\frac{1-l}{2}$ that the following holds,
\begin{equation}\label{y1}
|u_k -  \delta \bar u_k|^2 \leq \tilde M_k^2(1-\delta)^2
\end{equation}
in $B_r \times (-r^2s, 0)$. At this point we let $\alpha$ such that
\begin{equation}\label{al}
\alpha< \text{min} (2s, \text{log}_r(1-\delta))
\end{equation}
With such a choice of $\alpha$, it  follows from \eqref{y1}  by scaling back,  that corresponding to
\begin{equation}\label{c1}
 \rho_{k+1} = \rho_{k} + \delta \bar u_k,
\end{equation} the following estimate holds,
\begin{equation}\label{y2}
|u- \rho_{k+1}|^2 \leq  \tilde  M_k^2 r^{2k \alpha} = \tilde M_{k+1}^2
\end{equation}
in $B_{r^k} \times (-r^{2ks}, 0])$ which  verifies i) and iii) of the hypothesis of the Lemma. Also, we have that
\[
|\rho_{k+1}| + M_k \leq \rho_{k} + \delta \bar u_k + M_{k-1}(1-\delta) \leq |\rho_k| + M_{k-1} + \delta(\bar u_k - M_{k-1}) \leq M
\]
and thus ii) holds as well. Thus we are only left with checking iv). Now note that  from the choice of $\rho_{k+1}$ as  in \eqref{c1}, we have that
\begin{equation}\label{re1}
|\rho_{k+1} - \rho_k| = \delta \bar u_k \leq \delta M_k \leq \delta Mr^{k\alpha}
\end{equation}
From \eqref{re1} and the induction hypothesis, using
\begin{equation}
|\rho_{k+1} -\rho_i| \leq \sum_{l=i+1}^{k+1}  |\rho_{l} -\rho_{l-1}|
\end{equation}
we observe that iv) follows as well with $C= \frac{1}{1-r} \delta$.  Thus the induction step is verified and the conclusion follows. 

\end{proof}


From Lemma \ref{corocontract}, we have that $u$ is H\"older continuous at $(0,0)$ and  since $(0,0)$ is an arbitrary point of the domain,  Theorem \ref{nonlocal} thus follows.

\subsection{Weak harnack for  $(\partial_t - \Delta)^s=H^s$}
We now  prove the  weak harnack inequality for $(\partial_t - \Delta)^s$ as stated in Theorem \ref{fHarnackIntro}.   In order to do so,  similar to that  in \cite{banerjeeGaro}, \cite{NS}  and \cite{ST}, we first introduce the relevant function space which constitutes the natural domain for $(\partial_t- \Delta)^s$, namely
\[
\operatorname{Dom}(H^s) = \{u\in L^2(\R^{n+1})\mid (\xi,\sigma) \to ((2\pi|\xi|)^2 + 2\pi i \sigma)^s\  \hat u(\xi,\sigma)\in L^2(\R^{n+1})\}
\]
We then proceed with the proof of Theorem \ref{fHarnackIntro}.

\begin{proof}[Proof of Theorem \ref{fHarnackIntro}]
Using  the extension approach  for the fractional heat  operator as in \cite{NS} and \cite{ST} which constitutes the parabolic counterpart of the celebrated Caffarelli-Silvestre extension as in \cite{CafSil},  we note that  corresponding to $v$, the function $U$ be defined by
\begin{equation}\label{UU}
U(x,y,t) = \frac{y^{2s}}{2^{2s} \G(s)} \int_0^\infty \tau^{-(1+s)} e^{-\frac{y^2}{4\tau}} e^{-\tau H} v(x,t) d\tau
\end{equation}
 is a weak solution to the following weighted Neumann problem, 
\begin{equation}\label{ext}
\begin{cases}
L_a U= \operatorname{div}(y^a \nabla U) - y^a U_t=0\ \text{in $\{y>0\}$ where $a=1-2s$}
\\
U(\cdot,0, \cdot)= v\ \text{in $L^2(\R^{n+1})$}
\\
- \text{lim}_{y \to 0} y^a U_y= C(s) (\partial_t - \Delta)^s v\ \text{in  $L^{2}(\R^{n+1})$ for some $C(s)>0$.}
\end{cases}
\end{equation}
See for instance Lemma 4.5 in \cite{banerjeeGaro}.
Since $v$ is a supersolution, it follows from \eqref{ext} that if $U$ is evenly reflected across $\{y=0\}$, that the extended  function $V$  is a weak solution to
\begin{equation}\label{ac}
\operatorname{div}(|y|^a \nabla V) - |y|^a V_t \leq 0\ \text{in $B_2 \times (-2, 2) \times (-4, 0]$.}
\end{equation}
We now show the validity of a  mean value inequality for $V$ from which the weak harnack follows as a consequence. 
We adapt a few ideas  from \cite{FG}.  Let $G_a(x,y, t,\xi, \eta, s)$ be the backward heat kernel for $L_a$ as in  Proposition 2.3 in \cite{BDGP} ( see also \cite{G})  with pole at $(\xi, \eta, s) \in \R^n \times \R  \times \R$, i.e. for $t \neq s$, it solves
\[
L_a^* G_a = \operatorname{div}(y^a \nabla G_a) +  y^a \partial_t G_a=0
\]

  For a given $\epsilon, \delta>0$,  Let $\Omega_{\epsilon, \delta}(\xi, \eta, s)$ be the region   defined as
\[
\Omega_{\epsilon, \delta}(\xi, \eta, s)=  \{(x, y, t): G_a(x,y,t, \xi, \eta, s) > 1,  |y| > \epsilon, |t-s| >\delta\}
\]
Since $L_a^* G_a=0$, we have
\begin{align}
& \int_{\Omega_{\epsilon, \delta}} G_a L_a V -  VL_a^* G_a=0
\end{align}
By integrating by parts, we then obtain
\begin{align}\label{l1}
& 2 \int_{\{y= \epsilon\} \cap \Omega_{\epsilon, \delta} } y^a (G_a)_y V -  y^a V_y G_a  - \int_{\{|t-s|=\delta\} \cap \Omega_{\epsilon, \delta}}  y^a G_a V
\\
&+ \int_{\partial \Omega_{\epsilon, \delta} \setminus ( \{|y|=\epsilon\} \cup  \{|t-s|=\delta\})}  y^a G_a <\nabla V, n_{(x,y)}> - V <\nabla G_a, n_{(x,y)}>  -  y^a VG_a n_{t}=0
\notag
\end{align} 
where $n=(n_{(x,y)}, n_s)$ is the normal to the boundary $\partial \Omega_{\epsilon, \delta}$. Here we also used the fact that both $V$ and $G_a$ are symmetric across $\{y=0\}$. Now since $G_a=1$ on $\partial \Omega \setminus  ( \{|y|=\epsilon\} \cup  \{|t-s|=\delta\})$, we  have that
\begin{align}\label{l5}
& \int_{\partial \Omega_{\epsilon, \delta} \setminus ( \{|y|=\epsilon\} \cup  \{|t-s|=\delta\})} y^a G_a <\nabla V, n_{(x,y)}>- y^a V G_a n_{t}=  \int_{\Omega_{\epsilon, \delta}} L_a V 
\\
& + 2 \int_{\{y=\epsilon\}  \cap \Omega_{\epsilon, \delta}} y^a V_y  + \int_{\{|t-s|=\delta\} \cap \Omega_{\epsilon, \delta}} y^a V dX
\notag
\end{align}
From \eqref{l1} and \eqref{l5} we obtain using $L_a V=0$ that the following holds,
\begin{align}\label{l50}
& 2 \int_{\{y= \epsilon\} \cap \Omega_{\epsilon, \delta} } y^a (G_a)_y V -  y^a V_y (G_a -1)  - \int_{\{|t-s|=\delta\} \cap \Omega_{\epsilon, \delta}}  y^a G_a V
\\
&-  \int_{\partial \Omega_{\epsilon, \delta} \setminus ( \{|y|=\epsilon\} \cup  \{|t-s|=\delta\})}  y^a V <\nabla G_a, n_{(x,y)}> + \int_{\{|t-s|=\delta\} \cap \Omega_{\epsilon, \delta}} y^a V dX=0\notag
\end{align}

Now we note that  as $y \to 0$, since $y^a (G_a)_y \to 0$ and $y^a V_y \leq 0 $ at $y=0$, therefore by letting $\epsilon \to 0$ in \eqref{l50} and also by using $G_a -1 \geq 0$ in $\Omega_{\epsilon, \delta}$,  we deduce that the following holds,
\begin{align}\label{l2}
& - \int_{\{|t-s|=\delta\} \cap \Omega_{ \delta}}  y^a G_a V
\\
&- \int_{\partial \Omega_{ \delta} \setminus   \{|t-s|=\delta\}}  y^a V <\nabla G_a, n_{(x,y)}>  + \int_{\{|t-s|=\delta\} \cap \Omega_{ \delta}} y^a V  \leq 0
\notag
\end{align} 
where $\Omega_{\delta}(\xi, \eta, s)= \{(x, y, t): G_a(x,y,t, \xi, \eta, s) > 1,   |t-s| >\delta\}$. Now we claim that $\delta \to 0$, 
\begin{equation}\label{cl1}
\int_{\{|t-s|=\delta\} \cap \Omega_{ \delta}}  y^a G_a V \to V(\xi, \eta, s)
\end{equation}
Now since
\[
\int_{\{|t-s|=\delta\} }  y^a G_a V \to V(\xi, \eta, s)
\]
as $\delta \to 0$,  \eqref{cl1} is  thus equivalent to showing 
\begin{equation}\label{cl2}
\int_{\{|t-s|=\delta\} \cap \{G_a <1\}}  y^a G_a V  \to 0
\end{equation}
Now \eqref{cl2}  can be seen  from a change of variable formula of the type
\[
|t-s|^{1/2} Z + (\xi, \eta)= X
\]
where $X=(x,y)$,  using the explicit representation of $G_a$ as in (2.10) in \cite{BDGP} and also by using the asymptotics of the Bessel function $I_{\frac{a-1}{2}}$ as in   (6.19) in \cite{BDGP}. Now \eqref{l2}, \eqref{cl1}  coupled with the fact that
\[
\int_{\{|t-s|=\delta\} \cap \Omega_{ \delta}} y^a V \to 0\ \text{as $\delta \to 0$}
\]

implies
\begin{align}\label{l4}
& - V(\xi, \eta, s)
\\
&- \int_{\partial \Omega }  y^a V \left<\nabla G_a, n_{(x,y)}\right>  \leq 0
\notag
\end{align}
where $\Omega= \{G_a >1 \}$. Now  since $n_{x,y}= -\frac{\nabla G}{|\nabla_{(x,y,t)} G|}$, we obtain from \eqref{l4} that the following holds, 
\begin{align}\label{me1}
& V(\xi, \eta, s) \geq \int_{\partial \Omega} y^a  V \frac{|\nabla G_a|^2}{|\nabla_{(x,y,t)} G_a|}
\end{align}
More generally, if $\Omega_r(\xi, \eta, s)= \{G_a (x,y,t, \xi, \eta, s) > r\}$, then we have that
\[
V(\xi, \eta, s) \geq \int_{\partial \Omega_r} y^a  V \frac{|\nabla G_a|^2}{|\nabla_{(x,y,t)} G_a|}
\]
Then by using the Coarea formula, we obtain for all $r>0$, that the  following mean value inequality holds, 
\begin{equation}\label{me0}
V(\xi,\eta, s) \geq \int_{\Omega_r(\xi, \eta, s)} y^a V |\nabla G_a|^2
\end{equation}
Now since $V(\cdot, \eta) \to v$ in $L^{2}$ as $\eta \to 0$, therefore upto a subsequence $\eta_k \to 0$, we have that $V(\cdot, \eta_k) \to v$ pointwise a.e. Therefore from \eqref{me0} it follows for  a.e. $(\xi, s)$,
\begin{equation}\label{m1}
v(\xi, s) \geq \int_{\Omega_r(\xi, 0, s)} y^a V |\nabla G_a(\cdot, \xi, 0, s)|^2 =   \int_{\Omega_r(\xi, 0, s)} y^a V \frac{|X- (\xi, 0)|^2}{2 |t-s|^2}  G_a dXdt
\end{equation}
Now let $r=1$ in \eqref{m1}. Then from the representation of $U$( and consequently that of $V$)  in terms of $v$ as in  \eqref{UU} and the fact that $v \geq 0$,  we observe that  the following holds,

\begin{equation}\label{lb}
 \int_{\Omega_1(\xi, 0, s)} y^a V \frac{|X- (\xi, 0)|^2}{2 |t-s|^2}  G_a dXdt \geq C(n, s) \int_{ \{(x,t): |x- \xi| < 1/2,  1/4< s-t < 1/2 \}}  v(x,t) dx dt
 \end{equation}
Using \eqref{lb} in \eqref{m1} we finally have  for a.e. $(\xi, s)$
 \begin{equation}\label{wk}
 v(\xi, s) \geq C \int_{ \{(x,t): |x- \xi| < 1/2,  1/4< s-t < 1/2 \}}  v(x,t) dx dt
 \end{equation}
 from which the weak harnack estimate as claimed in \eqref{f2} follows. This finishes the proof of the Theorem.
 \end{proof}
 We  end our discussion with the following remark.
 
 \begin{remark}
 We note that although the equation in \eqref{ac} constitutes a prototypical example of a more general class of equations studied in \cite{CSe},  a weak harnack estimate of the type \eqref{f2} with $L^{1}$ norm on the left hand side is not explicitly proven in \cite{CSe}. Moreover, we also believe that mean value type inequality that we establish for generic supersolutions to the extension operator $L_a$ as an intermediate step   may find other applications. 
 
 \end{remark}



\begin{thebibliography}{00}


\bibitem{banerjeeGaro}
 Banerjee, A. ; Garofalo, N. {\em  Monotonicity of generalized frequencies and the strong unique continuation property for fractional parabolic equations.}  Adv. Math. 336 (2018), 149-241.
 
 \bibitem{BDGP}
 Banerjee, A, ; Danielli, D. ; Garofalo, N.; Petrosyan, A. {\em The structure of the singular set in the thin obstacle problem for degenerate parabolic equations}, arXiv:1902.07457.
 


\bibitem{Caffarelli-Systems}
Caffarelli, L. A. {\em Regularity theorems for weak solutions of some nonlinear systems.} Comm. Pure Appl. Math. 35 (1982), no. 6, 833-838.


\bibitem{CaD}
Caffarelli, L. A; D\'avila, G. {\em Interior regularity for fractional systems}. Annales de l'Institut Henri Poincar\'e C, Analyse non lin\'eaire, Volume 36, Issue 1, 2019, Pages 165-180.

%

\bibitem{CafSil}
Caffarelli, L.; Silvestre, L. {\em An extension problem related to the fractional Laplacian.} Comm. Partial Differential Equations 32 (2007), no. 7-9, 1245-1260. 


\bibitem{CD}
Chang-Lara, H; D\'avila, G. {\em H\"older estimates for non-local parabolic equations with critical drift.} Journal of Differential Equations, Volume 260, Issue 5, 2016, Pages 4237-4284.
%
\bibitem{CWY}
 Chang, Sun-Yung A.; Wang, Lihe; Yang, Paul C. {\em Regularity of harmonic maps}. Comm. Pure Appl. Math. 52 (1999), no. 9, 1099-1111. 
 
 \bibitem{chen-lin}
 Chen, F.H. Lin, {\em  Evolution equations with a free boundary condition.} J. Geom. Anal. 8 (1998), no. 2, 179-197.
 
\bibitem{CSe}
 Chiarenza  F.; Serapioni, R. \emph{A remark on a Harnack inequality for degenerate parabolic equations.}, Rend. Sem. Mat. Univ. Padova, 73 (1985), 179-190.  
\bibitem{DLP}
Da Lio, F.; Pigati, A. {\em Free boundary minimal surfaces: a nonlocal approach}. To appear in Annali SNS (2009).  

\bibitem{DR}
Da Lio, F; Rivi\`ere.{\em Three-term commutator estimates and the regularity of 1/2 harmonic maps into spheres.} Anal. PDE 4 (2011), no. 1, 149-190.
%
\bibitem{DR2}
Da Lio, F; Rivi\`ere. {\em Sub-criticality of non-local Schr\"odinger systems with antisymmetric
potentials and applications to half-harmonic maps.} Adv. Math. 227 (2011), no. 3, 1300-1348.


\bibitem{DKP}
Di Castro, A. ;  Kuusi T.;  Palatucci, G.  {\em Local behavior of fractional p-minimizers},  Ann. Inst. H. Poincare Anal. Non Lineaire 33 (2016), no. 5, 1279-1299.

%
%





\bibitem{FG}
Fabes, E; Garofalo, N. {\em  Mean value properties of solutions to parabolic equations with variable coefficients.}  J. Math. Anal. Appl. 121 (1987), no. 2, 305-316.
\bibitem{FK}
Felsinger, M; Kassmann, M. {\em Local Regularity for Parabolic Nonlocal Operators.} Communications in Partial Differential Equations, 38:9, 1539-	1573, 2013.


\bibitem{G}
Garofalo, N. {\em Fractional thoughts}, New developments in the analysis of nonlocal operators, 1-135,
Contemp. Math., 723, Amer. Math. Soc., Providence, RI, 2019.
\bibitem{heleinBook}
 H\'elein, F.  Harmonic maps, conservation laws and moving frames. Translated from the 1996 French original. With a foreword by James Eells. Second edition. Cambridge Tracts in Mathematics, 150. Cambridge University Press, Cambridge, 2002. xxvi+264 pp.
 
\bibitem{HW}
Huang, T.; Wang, C. {\em Notes on the regularity of harmonic map systems.} Proc. Amer. Math. Soc. 138 (2010), no. 6, 2015-2023. 


\bibitem{bookLW}

F-H Lin; C. Wang, {\em The analysis of harmonic maps and their heat flows.} World Scientific Publishing Co. Pte. Ltd., Hackensack, NJ, 2008. xii+267 p

\bibitem{riviere}
Rivi\`ere, T. {\em Conservation laws for conformally invariant variational problems.} Invent. Math. 168 (2007), no. 1, 1-22.

\bibitem{roberts}
Roberts, J. {\em A regularity theory for intrinsic minimising fractional harmonic maps}. 
Calc. Var. Partial Differential Equations 57 (2018), no. 4, Art. 109, 68 pp

%
%
\bibitem{Millot-Sire}
Millot, Vincent; Sire, Yannick. {\em On a fractional Ginzburg-Landau equation and 1/2-harmonic maps into spheres.} Arch. Ration. Mech. Anal. 215 (2015), no. 1, 125-210.

\bibitem{pegon}
Vincent Millot, Marc Pegon, Armin Schikorra. {\em Partial regularity for fractional harmonic maps into spheres}, arxiv: 1909.11466.

\bibitem{NS}
 Nystr\"om K.;  Sande, O. \emph{Extension properties and boundary estimates for a fractional heat operator}, Nonlinear Analysis, 140~(2016), 29-37.


%
%
%
%

\bibitem{ST}
 Stinga, P.;  Torrea,  J. \emph{Regularity theory and extension problem for fractional nonlocal parabolic equations and the master equation}, SIAM J. Math. Anal., 49 (2017), no. 5, 3893-3924.
 
 \bibitem{St}
 Stromqvist, M. {\em Harnack's inequality for parabolic nonlocal equations}, Ann. Inst. H. Poincare Anal. Non Lineaire 36 (2019), no. 6, 1709-1745. 
\bibitem{Tr}
Trudinger, N. {\em Pointwise Estimates and Quasilinear Parabolic Equations.} Comm. Pure Appl. Math., 21: 205-226.


\end{thebibliography}
\end{document}